\newtheorem{thm}{Theorem}[section]
\newtheorem{lem}[thm]{Lemma}
\newtheorem{prop}[thm]{Proposition}
\newtheorem{exemp}[thm]{Example}
\theoremstyle{definition}
\newtheorem{defi}[thm]{Definition}
\newtheorem{rem}[thm]{Remark}
\newcommand\R{{\mathbb R}}
\newcommand\C{{\mathbb C}}
\newcommand\MScN[1]{\href{http://www.ams.org/mathscinet-getitem?mr=#1}{\nolinkurl{(#1)}}}
\newcommand\DOI[1]{\href{http://dx.doi.org/#1}{(doi: \nolinkurl{#1})}}
\newcommand\LINK[1]{\href{#1}{(link: \nolinkurl{#1})}}
\title[A generalized complex Ginzburg-Landau equation]{A generalized Complex Ginzburg-Landau Equation: global existence
and stability results}
\author{Sim\~ao Correia and M\'ario Figueira}
\begin{document}

\maketitle

\begin{abstract}
	We consider the complex Ginzburg-Landau equation with two pure-power nonlinearities and a damping term.
	After proving a general global existence result, we focus on the existence and stability of several periodic orbits, namely the trivial equilibrium, bound-states and solutions independent of the spatial variable. In particular, we construct bound-states either explicitly in the real line or through a bifurcation argument for a double eigenvalue of the Dirichlet-Laplace operator on bounded domains.
	\vskip10pt
	\noindent\textbf{Keywords}: complex Ginzburg-Landau; stability; periodic solutions.
	\vskip10pt
	\noindent\textbf{AMS Subject Classification 2010}: 35Q56, 35B10, 35B35.
\end{abstract}



\section{Introduction and main results}\label{Intro}

The complex Ginzburg-Landau equation models various physical pheno-\break
mena especially in theory of superconductivity and fluid dynamics. A particular Ginzburg-Landau equation can be written:
\begin{equation}\label{eq:CGL}
\partial_t A = (1 + i\alpha) \Delta A + (1 + i \beta) |A|^2 A + k A 
\end{equation}
which admit the development of singularities for certain values of parameters (see e.g. 
 \cite{Caz-Weiss1, Masmou, Popp}. However,
the introduction of a high-order term with a negative sign, like $ - (1 + i c) |A|^4A$, allows 
to saturate the explosive instabilities. We refer e.g. \cite{Aronson} and \cite{Deissler} for a more complete physical background.

We are concerned with the study of a generalized Ginzburg-Landau equation
\begin{equation}
\tag{gCGL}
\label{gCGL}
\left\{\begin{array}{lll}
\partial_t u = (a + i\alpha) \Delta u + (b + i \beta) |u|^{\sigma_1} u - (c+i\gamma) |u|^{\sigma_2}u +k u, \\
B u(t, x) = 0,\,\,\, x\in \partial \Omega,\,\, t\geq 0 \\
u(0, x) = u_0(x) &
\end{array}\right.
\end{equation}
where $B$ is the identity operator (Dirichlet condition) or $B = \frac{\partial} {\partial n}$ (Neumann condition). We assume
 $ a >0, \alpha, b, \beta, k \in \R,  \sigma_1, \sigma_2 > 0$ and $\Omega$ as a domain in $\R^N$
 of class $C^2$ with $\partial \Omega$
bounded. If $c = \gamma = 0$, \eqref{gCGL} is reduced to the complex Ginzburg-Landau equation (CGL), 
equation widely studied under several assumptions
on the parameters since the seminal paper \cite{Lever}; see also \cite{GV1, GV2, Okaza} and 
the references therein. 

In this paper, we extend some results of global existence of solutions and their stability and also the existence of standing 
wave solutions in one dimension, previously exposed for the complex Ginzburg-Landau equation in \cite{Cor-fig}, where only one nonlinear term was present. Moreover, we prove the existence of standing waves in bounded domains through a bifurcation argument applied to double eigenvalues of the Dirichlet-Laplace operator, which is new even in the context of (CGL). As mentioned, the main interest in adding a higher-order term is the need for more precise physical descriptions.

Define the linear operators $-A_D = (a + i \alpha) \Delta,\, a > 0$, with domain
$ D(A_D) = H^2(\Omega)\cap H^1_0 (\Omega)$ (Dirichlet condition) and  $-A_N = (a + i \alpha) \Delta$, with domain
\begin{eqnarray*}
 \lefteqn{D(A_N) =  \{ u\in H^1(\Omega) :  \Delta u \in L^2(\Omega)  \,\, {\rm and} } \\
 & &  - \int_\Omega \Delta u \bar v = -\int_\Omega \nabla u \cdot \nabla \bar v\, dx , \,\, \forall v\in H^1(\Omega)  \} 
 \end{eqnarray*}
 (Neumann condition). It is well known that these operators generate an analytic semi-group (see \cite{Frid}).
Denoting by $A$ any of these two operators $A_D$ or $A_N$,
let us introduce the following definition:

\begin{defi}
A function $u(\cdot) \in C( [0, T); L^2(\Omega)), \,\, T>0$, is called a strong solution of \eqref{gCGL} if
$u(t)\in D(A)$, ${{du}\over{dt}} (t)$ exists for $t\in(0, T),\,\, u(0) = u_0$ and the differential
equation in \eqref{gCGL} is satisfied in $L^2(\Omega)$ for all $t\in (0, T)$.
\end{defi}

Since $f(u) = (b + i\beta)|u|^{\sigma_1} u - (c + i \gamma) |u|^{\sigma_2} u
 + k u$ is locally Lipschitz in $H^1(\Omega)$ with
values in $L^2(\Omega)$, for $\sigma_1, \sigma_2 \leq 2/(N-2)^+$ (with the convention $2/(N-2)^+=+\infty$ if $N=1,2$), then there exists $T=T(u_0) > 0$ such that the problem \eqref{gCGL}
has a unique solution on $[0, T_0)$, and this solution depends continuously of the
initial data (see \cite{Henry}, pag. 54 and 62). We begin with a global existence result:

\begin{thm}\label{existence}
Let $\Omega$ be a domain in $\R^N$ of class $C^2$ with $\partial \Omega$ bounded. Assume 
$0 < \sigma_j \leq 2/(N-2)^+$. 
Then, for any $u_0 \in H_0^1 (\Omega )$ (resp. $u\in H^1(\Omega)$), 
there exists $T = T(u_0) > 0$ such that \eqref{gCGL}
with $A = A_D$ (resp. $A = A_N$) has a unique strong solution on $[0, T)$ and this
solution depends continuously of the initial data. Moreover, if $ 0 < \sigma_1 < \sigma_2$,
 $c > 0,\ \alpha \neq 0 $ and $\gamma / \alpha \geq 0$,
 the solution is global.
\end{thm}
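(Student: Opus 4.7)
The scheme is the standard blow-up alternative for semilinear parabolic equations: local existence and continuous dependence in $H^1$ (resp.\ $H^1_0$) have already been granted, so the solution extends to a maximal interval $[0,T_{\max})$, and $T_{\max}<+\infty$ forces $\|u(t)\|_{H^1}\to+\infty$ as $t\to T_{\max}^-$. The plan is therefore to rule this out by establishing a priori bounds on $\|u(t)\|_{H^1}$ on every bounded subinterval, via two energy estimates.

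First I derive an $L^2$ bound by testing \eqref{gCGL} against $\bar u$ and taking the real part; integration by parts (which respects both boundary conditions) kills the $\alpha\Delta$-contribution and gives
\[
\tfrac12\tfrac{d}{dt}\|u\|_{L^2}^2+a\|\nabla u\|_{L^2}^2+c\|u\|_{L^{\sigma_2+2}}^{\sigma_2+2}=b\|u\|_{L^{\sigma_1+2}}^{\sigma_1+2}+k\|u\|_{L^2}^2.
\]
Since $\sigma_1<\sigma_2$ and $c>0$, the pointwise Young inequality $|s|^{\sigma_1+2}\leq\varepsilon|s|^{\sigma_2+2}+C_\varepsilon|s|^2$ (valid for every $\varepsilon>0$) reduces the $b$-term to $\tfrac{c}{2}\|u\|_{L^{\sigma_2+2}}^{\sigma_2+2}+C\|u\|_{L^2}^2$, and Gronwall then yields the exponential bound $\|u(t)\|_{L^2}^2\leq\|u_0\|_{L^2}^2\,e^{Ct}$ together with locally time-integrated control of $\|\nabla u\|_{L^2}$ and $\|u\|_{L^{\sigma_2+2}}$.

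For a pointwise-in-time $H^1$ bound, I next test \eqref{gCGL} against $-\Delta\bar u$, take the real part, and integrate the nonlinear terms by parts. Using $u=re^{i\theta}$ and $|\nabla u|^2=|\nabla r|^2+r^2|\nabla\theta|^2$, the higher-order nonlinearity contributes positive dissipation $c(\sigma_2{+}1)\!\int r^{\sigma_2}|\nabla r|^2+c\!\int r^{\sigma_2+2}|\nabla\theta|^2$ accompanied by a sign-indefinite cross term $\gamma\sigma_2\!\int r^{\sigma_2+1}\nabla r\cdot\nabla\theta$; the subcritical nonlinearity produces analogous terms with coefficients $b,\beta$.

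The main obstacle is absorbing the cross term $\gamma\sigma_2\int r^{\sigma_2+1}\nabla r\cdot\nabla\theta$, which has no intrinsic sign. A direct Cauchy--Schwarz splitting $\delta c\!\int r^{\sigma_2+2}|\nabla\theta|^2+(\gamma^2\sigma_2^2/4\delta c)\!\int r^{\sigma_2}|\nabla r|^2$ can be swallowed by the $c$-dissipation only when $|\gamma|$ is sufficiently small relative to $c$, and it is here that the hypothesis $\gamma/\alpha\geq 0$ (with $\alpha\neq 0$) must enter. I expect this to be exploited by coupling the $-\Delta\bar u$-identity with the imaginary part of the $\bar u$-identity,
\[
\int|u|^2\partial_t\theta=-\alpha\|\nabla u\|_{L^2}^2+\beta\|u\|_{L^{\sigma_1+2}}^{\sigma_1+2}-\gamma\|u\|_{L^{\sigma_2+2}}^{\sigma_2+2},
\]
so that the troublesome $\gamma$-term is paired with an $\alpha$-dissipation whose sign is made compatible by the assumption, and the resulting combination becomes absorbable. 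The $\beta\sigma_1$ cross term from the subcritical nonlinearity is then controlled in the same way by Young's inequality and the $L^2$ bound above. A final Gronwall inequality for $\|\nabla u\|_{L^2}^2$ precludes finite-time blow-up and yields $T_{\max}=+\infty$.
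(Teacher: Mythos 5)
Your setup (blow-up alternative, the $L^2$ identity from pairing with $\bar u$, and the Young/interpolation absorption of the $b$-term into the $c$-dissipation) matches the paper, and you correctly locate the crux: after pairing with $-\Delta\bar u$, the sign-indefinite phase--amplitude cross term carrying the coefficient $\gamma$ (equivalently, $-\gamma\,\Im\int_\Omega\Delta\bar u\,|u|^{\sigma_2}u\,dx$) cannot be absorbed by Cauchy--Schwarz alone, and the hypotheses $\alpha\neq0$, $\gamma/\alpha\ge0$ must enter there. But the mechanism you propose for this step does not work, and this is a genuine gap. The identity $\int|u|^2\partial_t\theta=-\alpha\|\nabla u\|_{L^2}^2+\beta\|u\|_{L^{\sigma_1+2}}^{\sigma_1+2}-\gamma\|u\|_{L^{\sigma_2+2}}^{\sigma_2+2}$ contains no weighted gradient terms of the form $\int r^{\sigma_2+1}\nabla r\cdot\nabla\theta$, so it cannot cancel or pair with the troublesome term; worse, its left-hand side $\int r^2\partial_t\theta$ is not the time derivative of any functional, so it cannot be added to the energy identity to produce a Gronwall-able quantity. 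You acknowledge this yourself (``I expect this to be exploited by\dots''), so the decisive step of the global-existence argument is not actually carried out.

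The paper's device is different and concrete: pair the equation with $|u|^{\sigma_2}\bar u$ and take real parts, obtaining an identity for $\frac{1}{\sigma_2+2}\frac{d}{dt}\|u\|_{L^{\sigma_2+2}}^{\sigma_2+2}$ whose right-hand side contains $-\alpha\,\Im\int_\Omega\Delta u\,|u|^{\sigma_2}\bar u\,dx$. Multiplying this identity by $\gamma/\alpha$ and adding it to the $L^2$ and $\dot H^1$ identities makes the $\gamma$-cross term cancel \emph{exactly}, at the price of working with the modified energy $\frac12\|u\|_{H^1}^2+\frac{\gamma}{\alpha(\sigma_2+2)}\|u\|_{L^{\sigma_2+2}}^{\sigma_2+2}$; the hypothesis $\gamma/\alpha\ge0$ is precisely what makes this functional coercive and what gives the new term $\bigl(c+\frac{\gamma a}{\alpha}\bigr)\Re\int_\Omega\Delta u\,|u|^{\sigma_2}\bar u\,dx$ a favorable sign. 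The same combination produces the extra dissipation $-\frac{\gamma c}{\alpha}\|u\|_{L^{2\sigma_2+2}}^{2\sigma_2+2}$, which (together with $-a\|\Delta u\|_{L^2}^2$ and interpolation of $L^{2\sigma_1+2}$ between $L^2$ and $L^{2\sigma_2+2}$) is what actually absorbs the $b,\beta$ cross terms from the lower-order nonlinearity --- your suggestion that the $L^2$ bound suffices for those is also too optimistic, since they involve weighted gradient integrals. To repair your argument, replace the auxiliary identity $\Im\int\bar u\,u_t$ by the real part of $\int|u|^{\sigma_2}\bar u\,u_t$ and form the weighted sum as above.
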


As expected, the lower order nonlinear term does not influence the global existence result. This proves in particular that the addition of a higher-order term with a specific sign prevents any possible blow-up mechanisms.

The existence of standing waves for the complex Ginzbourg-Landau equation remains a largely open problem. Before we proceed, we rewrite the generalized complex Ginzburg-Landau equation in its trigonometric form, following the notations of \cite{Caz-Weiss2} and \cite{Cor-fig}:
 
  \begin{equation}
  \tag{gCGL*}
  \label{gGLcomplex}
 u_t = e^{i\theta} \Delta u + e^{i \gamma_1} | u |^{\sigma_1}u + \chi  e^{i \gamma_2} | u |^{\sigma_2}u + ku
 \end{equation}
 
 \noindent where $-\pi/2 < \theta < \pi/2,\, -\pi<\gamma_1, \gamma_2\le \pi$, $ \,k\in \R,\, \chi = \pm 1.$
  \noindent Then one may look for solutions of \eqref{gGLcomplex} in the form $ u = e^{i\omega t} \phi (x) $, where
 $\phi \in H^1(\Omega)$ is a solution of the elliptic equation
 
 \begin{equation}\label{gGLelip1}
 \tag{B-S}
 i\omega \phi = e^{i \theta} \Delta \phi + e^{i \gamma_1} | \phi |^{\sigma_1} \phi + \chi  e^{i \gamma_2} | \phi |^{\sigma_2} \phi
 +  k \phi
 \end{equation}

For $k=0$,
the existence of standing wave solutions is already known in some particular cases : 
$\theta = \pm \gamma = \pm \pi/2$, which corresponds to the nonlinear Schr\"odinger equation
or $\omega = 0$ (stationary solutions). 
Outside of these cases, we refer \cite{Caz-Weiss2, Cor-fig, Puel}, where the
implicit function theorem is used to obtain the existence of standing waves of \eqref{gGLelip1} for $\chi=0$ and several constraints on the remaining
parameters. In \cite{Caz-Weiss2} it is proven that, for $\Omega$ bounded, the equation
\eqref{gGLelip1} (with $k=0$) has a solution $(\omega, u)\in \R\times H^1_0 (\Omega)$ bifurcating from $u= 0$ if $\sigma$ is sufficiently
small and $\cos \theta \cos \gamma > 0$. A similar result is obtained in \cite{Cor-fig}
where the aim was to trade the freedom in $k$ for the freedom in $\sigma$. The reference \cite{Puel} focuses on a bifurcation argument starting from the ground-state solution of the nonlinear Schr\"odinger equation, for both $\Omega$ bounded and the whole space (under some radial assumptions).

Our first result concerns an explicit bound-state in the real line.

 \begin{thm}\label{thmbs}
    Suppose $\Omega=\mathbb{R}$. Fix $-\pi/2 <\theta<\pi/2$ and $\omega, k\in \mathbb{R}$ such that $\omega\cos\theta + k\sin\theta\neq 0$. Define
	  \begin{equation}\label{eq:valor-d}
	d = \frac{ k \cos \theta - \omega\sin\theta  + \sqrt{\omega^2 + k^2}}
	{\omega\cos\theta + k \sin\theta}
	\end{equation}
	and let $\gamma_j\in (-\pi,\pi], \,(j=1, 2)$ be the unique solutions of
		\begin{equation}\label{eq:restricaogamma}
	\tan(\gamma_j-\theta)=\frac{d(\sigma_j+4)}{\sigma_j+2-2d^2},\quad d\sin(\gamma_j-\theta)+\cos(\gamma_j-\theta)>0.
	\end{equation}
	Then
	
	\noindent $a)$ If $\chi = 1$ the generalized complex Ginzburg-Landau equation 
	admits a bound-state of the form
	\begin{equation}\label{bsgGL}
	\phi=\psi\exp\left(id\ln\psi\right),
	\end{equation}
	where $\psi$ is the bound-state for the nonlinear Schr\"{o}dinger equation:
	\begin{equation}\label{est-schr1}
	\psi'' = \epsilon \psi - \eta_1 \psi^{\sigma_1+1} - \chi \eta_2 \psi^{\sigma_2+1}
	\end{equation}
	with
	$$ \epsilon = \frac{\sqrt{\omega^2+k^2}}{1+d^2},\quad
	\eta_j = \frac{d \sin (\gamma_j-\theta) + \cos (\gamma_j - \theta)}{1+d^2} $$
	
	\noindent $b)$ If $\chi = -1$, there exists a small enough $\delta > 0$ such that for
	$ 0 < \sigma_2 < \delta$ the generalized Ginzburg-Landau equation admits a bound-state of the form
	\eqref{bsgGL} with $\psi$  the bound-state for the nonlinear Schr\"{o}dinger equation satisfying
	\eqref{est-schr1}.
	
\end{thm}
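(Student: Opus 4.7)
The plan is to substitute the modulus--phase ansatz $\phi(x)=\psi(x)^{1+id}=\psi(x)\exp(id\ln\psi(x))$, with $\psi\colon\R\to(0,\infty)$ to be determined, into the standing-wave equation~\eqref{gGLelip1} and to show that the complex PDE decouples into the real NLS~\eqref{est-schr1} for $\psi$ together with algebraic identities equivalent to~\eqref{eq:valor-d} and~\eqref{eq:restricaogamma}. Once these identities are verified, the proof reduces to constructing a positive bound-state $\psi$ of~\eqref{est-schr1}, which we do by one-dimensional phase-plane analysis.

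Concretely, since $|\phi|=\psi$ one has $|\phi|^{\sigma_j}\phi=\psi^{\sigma_j+1+id}$, and a direct computation gives $\phi''=(1+id)\psi^{id-1}[id(\psi')^2+\psi\psi'']$. Inserting these into~\eqref{gGLelip1} and dividing by $\psi^{id}$, then eliminating $\psi''$ via~\eqref{est-schr1} and $(\psi')^2$ via its first integral
\[
(\psi')^2=\epsilon\psi^2-\tfrac{2\eta_1}{\sigma_1+2}\psi^{\sigma_1+2}-\tfrac{2\chi\eta_2}{\sigma_2+2}\psi^{\sigma_2+2},
\]
the equation becomes a complex identity among linear combinations of $\psi,\psi^{\sigma_1+1},\psi^{\sigma_2+1}$; linear independence of these power functions on the range of the bound-state then forces three complex algebraic conditions. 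The coefficient of $\psi$ yields $e^{i\theta}(1+id)^2\epsilon+k=i\omega$; taking moduli recovers $\epsilon(1+d^2)=\sqrt{\omega^2+k^2}$, while taking arguments leaves a quadratic in $d$ (well-defined thanks to $\omega\cos\theta+k\sin\theta\neq 0$) whose positive square-root branch is~\eqref{eq:valor-d}. The $\psi^{\sigma_j+1}$ coefficients, in which the two factors of $\chi$ cancel, reduce to
\[
e^{i(\gamma_j-\theta)}=\frac{\eta_j}{\sigma_j+2}\bigl[(\sigma_j+2-2d^2)+id(\sigma_j+4)\bigr],
\]
and imposing $\eta_j>0$ recovers both the tangent identity and the positivity condition in~\eqref{eq:restricaogamma}, while the modulus yields the stated formula for $\eta_j$.

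It remains to produce the positive, even, exponentially decaying $\psi$ solving~\eqref{est-schr1}. Writing the first integral as $(\psi')^2=-2V(\psi)$ with $V(\psi)=-\tfrac{\epsilon}{2}\psi^2+\tfrac{\eta_1}{\sigma_1+2}\psi^{\sigma_1+2}+\tfrac{\chi\eta_2}{\sigma_2+2}\psi^{\sigma_2+2}$, a positive bound-state corresponds to a homoclinic loop to $(0,0)$, which exists as soon as $V$ has a first positive zero $\psi_0$ with $V<0$ on $(0,\psi_0)$. In case a) ($\chi=1$) both nonlinear terms of $V$ are positive, $V(0)=V'(0)=0$, $V''(0)=-\epsilon<0$ and $V\to+\infty$ at infinity, so the intermediate value theorem produces $\psi_0$ and the homoclinic closes by reflection symmetry. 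In case b) ($\chi=-1$) the $\psi^{\sigma_2+2}$ term has the wrong sign, but taking $\delta<\sigma_1$ one ensures $\sigma_2<\sigma_1$, so the positive $\eta_1\psi^{\sigma_1+2}$ term still drives $V\to+\infty$ and the same argument applies. The main obstacle is the algebraic matching in the second paragraph---tracking signs, the cancellation of $\chi$ and selecting the correct root for $d$---whereas the existence of $\psi$ is a standard phase-plane exercise once the growth and sign of $V$ are understood.
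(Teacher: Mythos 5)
Your proposal follows essentially the same route as the paper: the ansatz $\phi=\psi^{1+id}$, elimination of $\psi''$ and $(\psi')^2/\psi$ via \eqref{est-schr1} and its first integral, coefficient matching in the powers $\psi,\psi^{\sigma_1+1},\psi^{\sigma_2+1}$ to recover \eqref{eq:valor-d} and \eqref{eq:restricaogamma}, and then a Berestycki--Lions-type phase-plane construction of $\psi$. Your complex-identity packaging of the matching conditions (e.g.\ $e^{i\theta}(1+id)^2\epsilon+k=i\omega$ and $e^{i(\gamma_j-\theta)}=\tfrac{\eta_j}{\sigma_j+2}[(\sigma_j+2-2d^2)+id(\sigma_j+4)]$) is correct and in fact tidier than the paper's separation into real and imaginary parts; the $\chi$-cancellation is also right.

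The gap is in part $b)$. Your stated sufficient condition for the homoclinic --- ``$V$ has a first positive zero $\psi_0$ with $V<0$ on $(0,\psi_0)$'' --- is not sufficient: one also needs the turning point to be nondegenerate, i.e.\ $V'(\psi_0)=f(\psi_0)>0$, otherwise $(\psi_0,0)$ is an equilibrium and the orbit is a front rather than a pulse (this is exactly the condition $f(x_0)>0$ in Theorem 5 of Berestycki--Lions, which the paper cites and verifies). In case $a)$ this positivity is automatic, but in case $b)$ it is precisely the point where the hypothesis on $\sigma_2$ enters: one must check that
$f(\psi_0)/\psi_0=\tfrac{\sigma_1\eta_1}{\sigma_1+2}\psi_0^{\sigma_1}-\tfrac{\sigma_2\eta_2}{\sigma_2+2}\psi_0^{\sigma_2}>0$,
which is what the paper does (and establishes for $\sigma_2$ small). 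Your justification for the smallness of $\delta$ --- ensuring $\sigma_2<\sigma_1$ so that $V\to+\infty$ --- addresses only the existence of a first zero, not this nondegeneracy, so ``the same argument applies'' is not yet a proof. The gap is fillable (indeed, combining $V(\psi_0)=0$ with $\epsilon>0$ gives $\tfrac{\eta_1}{\sigma_1+2}\psi_0^{\sigma_1}-\tfrac{\eta_2}{\sigma_2+2}\psi_0^{\sigma_2}=\epsilon/2>0$, whence $f(\psi_0)>0$ whenever $\sigma_2\le\sigma_1$), but as written the decisive verification is missing.
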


\begin{rem}
	We observe that the conditions $\omega^2+k^2\neq 0$ and $\arg(k-i\omega)\neq\theta$ imply 
	$\omega\cos\theta + k\sin\theta\neq 0 $. 
\end{rem}

\begin{rem}
	In \cite{Cor-fig}, the uniqueness and stability of bound-states defined on $\mathbb{R}$ was studied. The same arguments may be applied in our framework without any extra difficulty.
\end{rem}

For $\Omega$ bounded, following the spirit of \cite{Caz-Weiss2, Cor-fig} for (CGL), we wish to construct solutions of \eqref{gGLelip1} through a bifurcation argument applied to the trivial solution $u\equiv0$. Therein, a bifurcation from simple eigenvalues of the Laplacian is built directly as an application of the Implicit Function Theorem. In the context of the \eqref{gGLelip1}, a similar procedure should be applicable. Instead, we turn our focus to the bifurcation problem for eigenvalues of multiplicity two, inspired in the methodology presented in \cite{Berger}. We remark that, even in the special case $\chi=0$, this is an open problem. A classic example where one has double eigenvalues is the case of the square $\Omega=(-1,1)^2$: we refer to \cite{DelPinoGarciaMusso} for a bifurcation result in this specific case. Our main result is the following:

\begin{thm}\label{teo:bifurca}
Given $\Omega\subset \R^N$ bounded and $2\le\sigma_1+1\le\sigma_2<4/(N-2)^+$, suppose that $\lambda_0$ is a double eigenvalue of the Dirichlet-Laplace operator with $L^2$-orthogonal real-valued eigenfunctions $u_1$, $u_2$. Suppose that the equation
$$
P(\alpha)=\int_\Omega |u_1+\alpha u_2|^{\sigma_1}(u_1+\alpha u_2)(\alpha u_1-u_2)=0
$$
has a solution $\alpha_0\in \C$ satisfying $P'(\alpha_0)\neq 0$. Then there exist $\delta>0$ and a Lipschitz mapping
$$
\epsilon\in [0, \delta) \to (y, \lambda, \alpha) \in H^1_0(\Omega)\times \C \times \C,
$$
with $(y(0), \lambda(0), \alpha(0))= (0,e^{i\theta}\lambda_0, \alpha_0)$ and $(y,u_1)_{L^2}=(y,u_2)_{L^2}=0$, such that
$$
u=y(\epsilon) + \epsilon u_1 + \epsilon \alpha(\epsilon) u_2
$$
is a solution to \eqref{gGLelip1} for $k-i\omega=\lambda(\epsilon)$. Moreover,
\begin{equation}
\lambda(\epsilon)=e^{i\theta}\lambda_0-e^{i\gamma_1}\epsilon^{\sigma_1} \int_\Omega |u_1+\alpha_0u_2|^{\sigma_1}(u_1+\alpha_0 u_2)u_1dx + o(\epsilon^{\sigma_1}).
\end{equation}
\end{thm}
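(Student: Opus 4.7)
The natural approach is a Lyapunov--Schmidt reduction at the double eigenvalue $\lambda_0$, in the spirit of \cite{Berger}. Writing $\lambda=k-i\omega$, equation \eqref{gGLelip1} becomes $G(\phi,\lambda):=e^{i\theta}\Delta\phi+\lambda\phi+e^{i\gamma_1}|\phi|^{\sigma_1}\phi+\chi e^{i\gamma_2}|\phi|^{\sigma_2}\phi=0$. I would substitute the ansatz $\phi=y+\epsilon u_1+\epsilon\alpha u_2$ with $y$ in the complex $L^2$-orthogonal complement of $\mathrm{span}_\C\{u_1,u_2\}$, and set $\mu:=\lambda-e^{i\theta}\lambda_0$. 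Using $\Delta u_j=-\lambda_0 u_j$ one finds
$$ G(\phi,\lambda)=(e^{i\theta}\Delta+\lambda)y+\mu\epsilon(u_1+\alpha u_2)+e^{i\gamma_1}|\phi|^{\sigma_1}\phi+\chi e^{i\gamma_2}|\phi|^{\sigma_2}\phi. $$
Let $P$ denote the $L^2$-projection onto $\mathrm{span}_\C\{u_1,u_2\}$ and $Q=I-P$. The problem splits into the infinite-dimensional equation $QG=0$ and the two complex equations $(G,u_j)_{L^2}=0$, $j=1,2$.

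The first step is to solve the $Q$-equation for $y$. The operator $(e^{i\theta}\Delta+\lambda)|_Q$ is invertible for $\lambda$ near $e^{i\theta}\lambda_0$, because $\lambda_0$ has geometric multiplicity exactly two and the rest of the Dirichlet spectrum is bounded away from $\lambda_0$. Under the subcriticality $\sigma_j<4/(N-2)^+$, the Nemytskii maps $\phi\mapsto|\phi|^{\sigma_j}\phi$ are well defined from $H^1_0(\Omega)$ into $L^2(\Omega)$, and $\sigma_1\ge 1$ (equivalent to $\sigma_1+1\ge 2$) makes them $C^1$. The Implicit Function Theorem then yields a (Lipschitz) solution $y=y(\epsilon,\lambda,\alpha)\in Q$ for small parameters, vanishing at the base point $(\epsilon,\lambda,\alpha)=(0,e^{i\theta}\lambda_0,\alpha_0)$ and satisfying $\|y\|_{H^1}=O(\epsilon^{\sigma_1+1})$ since the nonlinear source is of that size.

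The second step is the finite-dimensional reduction. Projecting against the real functions $u_1,u_2$ and using the expansion $|\phi|^{\sigma_1}\phi=\epsilon^{\sigma_1+1}|u_1+\alpha u_2|^{\sigma_1}(u_1+\alpha u_2)+o(\epsilon^{\sigma_1+1})$, together with $\sigma_2-\sigma_1\ge 1$, which makes the $\chi$-term of lower order, I would divide by $\epsilon^{\sigma_1+1}$ and rescale $\tilde\mu:=\mu/\epsilon^{\sigma_1}$. Setting $A_j(\alpha):=\int_\Omega|u_1+\alpha u_2|^{\sigma_1}(u_1+\alpha u_2)u_j\,dx$, the reduced system reads
\begin{equation*}
\tilde\mu\,\|u_1\|_{L^2}^2+e^{i\gamma_1}A_1(\alpha)=o(1), \qquad \tilde\mu\,\alpha\,\|u_2\|_{L^2}^2+e^{i\gamma_1}A_2(\alpha)=o(1).
\end{equation*}
Eliminating $\tilde\mu$ (normalizing $\|u_1\|_{L^2}=\|u_2\|_{L^2}=1$ for clarity) collapses the system to $-e^{i\gamma_1}P(\alpha)=o(1)$, whose limiting root at $\epsilon=0$ is exactly $\alpha_0$.

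The non-degeneracy $P'(\alpha_0)\neq 0$ then lets the Implicit Function Theorem, applied to the rescaled system in $(\tilde\mu,\alpha)$ with parameter $\epsilon$, produce a Lipschitz branch $(\tilde\mu(\epsilon),\alpha(\epsilon))$ with $\alpha(0)=\alpha_0$ and $\tilde\mu(0)=-e^{i\gamma_1}A_1(\alpha_0)$; undoing the rescaling $\mu=\epsilon^{\sigma_1}\tilde\mu$ recovers the announced asymptotics for $\lambda(\epsilon)=e^{i\theta}\lambda_0+\mu(\epsilon)$. The main obstacle I anticipate is the limited regularity due to the non-integer exponent $\sigma_1$: the rescaling by $\epsilon^{\sigma_1}$ generically destroys $C^1$ dependence on $\epsilon$ at the origin, which is exactly why only \emph{Lipschitz} regularity can be claimed for $\epsilon\mapsto(y,\lambda,\alpha)$. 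A secondary delicate point is verifying that the reduced map is uniformly Lipschitz in $(\tilde\mu,\alpha,\epsilon)$ near the base point; this relies on the subcritical embeddings $H^1_0\hookrightarrow L^{\sigma_j+2}$ and on the hypothesis $\sigma_j\ge 1$ to control the Fréchet derivatives of the Nemytskii maps uniformly in a neighborhood of $\epsilon u_1+\epsilon\alpha_0 u_2$.
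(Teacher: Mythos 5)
Your proposal is correct and follows essentially the same route as the paper: a Lyapunov--Schmidt reduction, solution of the infinite-dimensional component $y$ with the bound $\|y\|_{H^1}=O(\epsilon^{\sigma_1+1})$ (the paper uses a Banach fixed-point argument where you invoke the Implicit Function Theorem, but this is immaterial), elimination of $\mu$ from the two projected equations to arrive at $P(\alpha)=o(1)$, and a Lipschitz (Clarke-type) Implicit Function Theorem to handle the non-smooth dependence on $\epsilon$ coming from the fractional power $\epsilon^{\sigma_1}$. The only cosmetic difference is that the paper does not rescale $\tilde\mu=\mu/\epsilon^{\sigma_1}$ but instead keeps the system in the variables $(\lambda,\alpha)$ with the leading term $\epsilon_1^{\sigma_1}(M_1(u_1+\alpha u_2),u_1)$ treated as part of the unperturbed map; both formulations lead to the same nondegeneracy condition $P'(\alpha_0)\neq 0$ and the same asymptotic expansion of $\lambda(\epsilon)$.
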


\begin{rem}
	As a consequence of the above result, if $P$ does not have multiple roots,  the number of branches bifurcating at $e^{i\theta}\lambda_0$ is equal to the number of simple roots of $P$ (counting permutations of $u_1$ and $u_2$, see Example \ref{exemplo}).
\end{rem}


We now focus on the stability of the equilibrium solution $u \equiv 0$, the asymptotic
decay of the global solutions of \eqref{gCGL} depending on the parameters and the stability
of some particular time periodic solutions. To be more precise, we give the following definition:

\begin{defi}
	We say that the equilibrium point $u\equiv 0$ is $E$-stable if for any $\delta > 0$ there exists $\varepsilon > 0$ such that
	$$ u_0\in E , \,\, \| u_0 \|_{E} < \,\varepsilon \Rightarrow \,
	\sup_{t \geq 0} \| u(t) \|_{E} < \delta $$
	In addition, we say that it is asymptotically stable if  there exists $\eta > 0$
	such that $\lim_{t\rightarrow\infty} \|  u(t) \|_{E} = 0$ for all $u_0\in H_0^1(\Omega),\,
	\| u_0 \|_{E} < \eta. $.
\end{defi}

First, we have the following result:

 \begin{thm}\label{thstabil}
 Concerning the Dirichlet problem, assume the hypothesis of  Theorem \ref{existence} and $0 < \sigma_1 < \sigma_2$.

\begin{enumerate}[1.]
\medskip

\item $ L^p$  stability:
\medskip
 
 \noindent If 
 $$ k \leq 0, \,\,\, |\alpha| \frac{p-2}{2} \leq a, \,\,\, b\frac{\sigma_1}{\sigma_2} \leq c\,\,\, {\it and }\,\,\,
b \frac{ \sigma_2 - \sigma_1}{\sigma_2} \leq |k|$$
  the equilibrium point $0$ is $L^p$-stable for $2 \leq p \leq \frac {2 N} {N - 2}$, if $N > 2,\,\, 2 \leq p < \infty$ if $N=1, 2$.
 
 \noindent In addition, if $k < 0$ and $ b \frac{ \sigma_2 - \sigma_1}{\sigma_2} < |k|$ we have the
 asymptotic stability and
 $$\| u(t, x) \|_{L^p} \rightarrow 0\mbox{ as }t\rightarrow \infty, \quad \hbox{\it for all }  u_0\in H_0^1(\Omega).$$ 
 
 \noindent In the particular case $p=2$, if $\Omega$ is a bounded domain, 
 $$k > 0, \,\,\,  b\frac{\sigma_1}{\sigma_2} \leq c\,\,\, {\it and}\,\,\,
  b^+ \frac{ \sigma_2 - \sigma_1}{\sigma_2} + k <   a  \left(\frac{1} {\omega_N} | \Omega | \right)^{- 2/N}$$
   where $b^+ = \max \{0, b\}$,  $\omega_N$ represents
 the volume of the unit ball in $\R^N$ and $|\Omega |$ the volume of $\Omega$, then 
 $ \| u(t) \|_{L^2} \rightarrow 0$ as $t\rightarrow \infty $, for all $ u_0\in H_0^1(\Omega).$
 \medskip
 
 \item $H^1$ stability:
 \medskip
 
 \noindent  Assume $\alpha /a = \beta / b = \gamma / c$.  
 Then,  the equilibrium point $0$ is  asymptotically stable in $H^1$ if
 
 \smallskip
 \noindent 1. $ k < 0$ and
$$ \frac{b}{\sigma_1+2} \frac{\sigma_1}{\sigma_2} \leq \frac{c}{\sigma_2+2},\quad
 b \frac{ \sigma_2 - \sigma_1}{\sigma_2} \leq \frac{|k|}{2},\quad
 b  (\sigma_1 +1) < \min \{c, |k| \} $$
\smallskip
 
 \noindent 2. $k = 0,\, \Omega$ is a bounded domain and
 $$  \frac{b}{\sigma_1+2} \frac{\sigma_1}{\sigma_2} \leq \frac{c}{\sigma_2+2},\quad
  b (\sigma_1+1) < \min \left\{ c, 
 \left( \frac{1}{\omega_N} |\Omega| \right)^{-2/N} \right\}  $$
 In both cases,
 $$\| u(t) \|_{H^1} \rightarrow 0\,\mbox{ as } t\rightarrow \infty, \quad \hbox{\it for all} \ u_0\in H_0^1(\Omega).$$
 
 \end{enumerate}
 
 \end{thm}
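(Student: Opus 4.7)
The proof proceeds via energy estimates obtained by testing \eqref{gCGL} against suitable functions.

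For the $L^p$ statements, multiply the equation by $|u|^{p-2}\bar u$, integrate over $\Omega$, and take real parts. Integration by parts on the diffusion term yields a nonpositive contribution under the pointwise condition $|\alpha|(p-2)/2\le a$ (the standard $L^p$-dissipativity bound for $(a+i\alpha)\Delta$ with Dirichlet data), while the remaining terms give
$$\frac{1}{p}\frac{d}{dt}\|u\|_{L^p}^p \le b\|u\|_{L^{p+\sigma_1}}^{p+\sigma_1} - c\|u\|_{L^{p+\sigma_2}}^{p+\sigma_2} + k\|u\|_{L^p}^p.$$
The key tool is the elementary Young inequality
$$s^{\sigma_1}\le \frac{\sigma_1}{\sigma_2}\,s^{\sigma_2} + \frac{\sigma_2-\sigma_1}{\sigma_2},\qquad s\ge 0,$$
applied pointwise to $s=|u|$ and integrated against $|u|^p$; this converts $b|u|^{\sigma_1+p}$ into a sum absorbed by the saturation $-c|u|^{\sigma_2+p}$ and the damping $k|u|^p$ under the hypotheses, giving $\frac{d}{dt}\|u\|_{L^p}^p\le 0$. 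A strict version of the damping condition then yields exponential decay. For $p=2$, $k>0$, and $\Omega$ bounded, I instead retain the $-a\|\nabla u\|_{L^2}^2$ dissipation and invoke the Faber--Krahn bound $\lambda_1(\Omega)\ge(|\Omega|/\omega_N)^{-2/N}$ to convert it into control of $\|u\|_{L^2}^2$, with the stated inequality making the net coefficient strictly negative.

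For the $H^1$ statements, the hypothesis $\alpha/a=\beta/b=\gamma/c=:\mu$ endows \eqref{gCGL} with a quasi-gradient structure. Introducing the Lyapunov functional
$$E(u)=\frac{a}{2}\|\nabla u\|^2 - \frac{b}{\sigma_1+2}\|u\|_{L^{\sigma_1+2}}^{\sigma_1+2} + \frac{c}{\sigma_2+2}\|u\|_{L^{\sigma_2+2}}^{\sigma_2+2} - \frac{k}{2}\|u\|^2,$$
with $E'(u)=-a\Delta u - b|u|^{\sigma_1}u + c|u|^{\sigma_2}u - ku$, equation \eqref{gCGL} rewrites as $u_t = -(1+i\mu)E'(u) - i\mu k u$. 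Since $\int E'(u)\bar u \in \R$, the drift $-i\mu k u$ is $L^2$-orthogonal to $E'(u)$, giving the dissipation identity $\frac{d}{dt}E(u(t)) = -\|E'(u(t))\|_{L^2}^2\le 0$. Applying the same Young inequality at the $L^{\sigma_1+2}$ level (together with the first two hypotheses of case 1, or with Faber--Krahn in case 2) shows that $E$ is coercive on $H^1$ near the origin, providing the stability. For asymptotic decay, I differentiate $\|\nabla u\|^2$ directly by testing against $-\Delta\bar u$; the nonlinear contribution is estimated via the polar identity
$$\mathrm{Re}\int|u|^{\sigma_1}u(-\Delta\bar u) = \int\bigl[(\sigma_1+1)|u|^{\sigma_1}\bigl|\nabla|u|\bigr|^2 + |u|^{\sigma_1+2}|\nabla\arg u|^2\bigr],$$
obtained by the decomposition $u=|u|e^{i\arg u}$ and integration by parts. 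The strict inequality $b(\sigma_1+1)<\min(c,|k|)$, respectively $b(\sigma_1+1)<\min(c,(|\Omega|/\omega_N)^{-2/N})$, is precisely what is needed for the $\sigma_1$-nonlinearity to be subcritical relative to the saturating $\sigma_2$-term and to the linear damping (or Poincar\'e constant), closing a Gr\"onwall inequality for $\|u\|_{H^1}^2$ and yielding exponential decay.

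The main obstacle is the $H^1$ part: the Lyapunov dissipation $dE/dt = -\|E'(u)\|_{L^2}^2$ alone is not enough to extract a rate for $\|u\|_{H^1}$, so one must perform a separate $H^1$-level energy estimate in which the structural factor $\sigma_1+1$ arises from the polar identity. The two nonlinearities interact through three competing mechanisms---coercivity of $E$, dominance by the $c|u|^{\sigma_2}u$ saturation, and dominance by the linear damping (or the first Dirichlet eigenvalue)---which are controlled respectively by the three hypotheses stated in each of cases 1 and 2.
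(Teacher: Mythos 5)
Your $L^p$ argument and your $H^1$ \emph{stability} argument coincide with the paper's: the same multiplier $|u|^{p-2}\bar u$ with the dissipativity condition $|\alpha|(p-2)/2\le a$, the same Young/interpolation step, Poincar\'e for $p=2$ on bounded domains, and the same Lyapunov functional $V$ with $\dot V=-\|a\Delta u+b|u|^{\sigma_1}u-c|u|^{\sigma_2}u+ku\|_{L^2}^2$ together with its coercivity. Up to that point the proposal is fine.

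The gap is in the $H^1$ \emph{asymptotic decay}. You assert that the dissipation identity ``is not enough to extract a rate'' and replace it with a direct estimate of $\frac{d}{dt}\|\nabla u\|_{L^2}^2$ obtained by testing against $-\Delta\bar u$. That test produces, besides the terms you list, the non-sign-definite imaginary cross terms
$$-\beta\,\mathrm{Im}\int_\Omega |u|^{\sigma_1}u\,(-\Delta\bar u)\,dx+\gamma\,\mathrm{Im}\int_\Omega |u|^{\sigma_2}u\,(-\Delta\bar u)\,dx
=-\beta\sigma_1\int_\Omega \rho^{\sigma_1+1}\nabla\rho\cdot\nabla\phi\,dx+\dots$$
(in your polar variables $u=\rho e^{i\phi}$), which you never address. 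They cannot be absorbed under the stated hypotheses: the theorem only assumes the proportionality $\alpha/a=\beta/b=\gamma/c=\mu$ with $\mu$ arbitrary, so these terms carry an unbounded coefficient, and moreover $\rho^{\sigma_1}\gg\rho^{\sigma_2}$ near $\rho=0$ prevents absorption into the saturating $c$-terms; controlling them by $|k|\,\|\nabla u\|^2$ would require smallness of $\|u\|_{L^\infty}$, which is unavailable (the claim is for \emph{all} $u_0\in H^1_0(\Omega)$, and $H^1\not\hookrightarrow L^\infty$ for $N\ge2$). The paper's point is precisely that the route you dismiss does work: one expands $-\dot V=\|a\Delta u+b|u|^{\sigma_1}u-c|u|^{\sigma_2}u+ku\|_{L^2}^2$, in which $\alpha,\beta,\gamma$ do not appear at all, and bounds it below by $k_2\|u\|_{H^1}^2$ using $|\mathrm{Re}\int\Delta u\,|u|^{\sigma_1}\bar u|\le(\sigma_1+1)\int|\nabla u|^2|u|^{\sigma_1}$ (your polar identity), $|u|^{\sigma_1}\le 1+|u|^{\sigma_2}$, the sign of $-2ac\,\mathrm{Re}\int\Delta u|u|^{\sigma_2}\bar u$, and Poincar\'e when $k=0$; the hypotheses $b(\sigma_1+1)<\min\{c,|k|\}$ are calibrated to exactly this absorption. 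Hale's lemma then gives $\|u(t)\|_{H^1}\to0$ (note also that the paper, and the theorem, claim only convergence, not the exponential rate you announce). To repair your proof you should either return to lower-bounding $-\dot V$, or supply a genuine mechanism for the $\beta,\gamma$ cross terms.
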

 
 \begin{rem} If $b=0$, one may easily prove the asymptotic
 stability in $H^1$ with the additional condition
  $$ 0 <  k \leq  \frac {a} {2}  \left(\frac{1} {\omega_N} | \Omega | \right)^{- 2/N} $$
\end{rem}
 
 \begin{rem} The results stated in the theorem extended trivially, with a slight modification, to the 
 \eqref{gCGL} equation with a Neumann condition, in the case $k < 0$. 
 \end{rem}

Finally, we study the stability of some particular time periodic solutions of the generalized complex
Ginzburg-Landau equation. Consider the \eqref{gCGL} equation on a bounded domain $\Omega$
with the Neumann condition on the boundary and assume $0 < \sigma_1 < \sigma_2$.
Take the associated ordinary differential equation,
\begin{equation}\label{ord-GL}
\dot{u} = (b + i \beta) | u |^{\sigma_1} u - (c + i \gamma) | u |^{\sigma_2} u + ku 
\end{equation}
and look for periodic solutions. If we assume that there exists $r_0 > 0$ such that
\begin{equation}\label{condition}
b r_0^{\sigma_1} - c r_0^{\sigma_2} + k = 0
\end{equation}
we obtain the explicit periodic solution
\begin{equation}\label{periodic}
u(t) = r_0 {\rm exp} ( i t (\beta r_0^{\sigma_1} - \gamma r_0^{\sigma_2})) 
\end{equation}
We consider now the two following cases \medskip

\begin{itemize}
	\item[1.]  $c = 0$ 
	and  $b k < 0$; the condition \eqref{condition} can be verified and the equation \eqref{ord-GL}
	allows a $T_1 $-periodic solution \eqref{periodic} which we denote
	by $p(t),\,  T_1 > 0$. 
	
	\item[2.] $k = 0$ and $b c > 0$; we obtain a $T_2 $-periodic solution \eqref{periodic} which we denote
	by $q(t),\,  T_2 > 0$.
	\smallskip
	
\end{itemize}
\smallskip
It is clear that the \eqref{gCGL} equation with the Neumann condition on the boundary 
allows the time periodic solutions $P(x, t) \equiv p(t),\, Q(x, t) \equiv q(t)$ for all $x\in \Omega$.

\begin{thm} Let $\Omega \subset \R^N$ a bounded domain and consider the \eqref{gCGL} equation
	with a Neumann condition on the boundary. Suppose the conditions of Theorem \ref{existence} are verified. \smallskip
	
	\noindent 1. Assume $c=0$.\smallskip
	
	If $b<0$ and $k > 0$, the $T_1$-periodic solution $P(x, t)$ is orbitally asympto-\break
	tically stable, i.e. there exists $\delta > 0$ and $\zeta > 0$ such that, if
	$$ \min_{0\leq t\leq T_1} \| u_0 - P(t) \|_{H^1} < \delta $$
	the solution $u(t)$ of \eqref{gCGL} with initial data $u(0) = u_0$ exists on $0\leq t < \infty$ and there exists
	a real $\omega$ and $c > 0$ such that
	$$ \| u(t) - P(t - \omega) \|_{H^1} \leq c \,\delta\,  e^{- \zeta t} $$
	
	If $b > 0$ and $k < 0$, $P(x, t)$ is strongly unstable: for $u_n^0\equiv r_0+1/n$, the solution $u_n$ with initial condition $u_n(0)=u_n^0$ blows-up in finite time.
	\smallskip
	
	\noindent 2. Assume $k=0$. 
	\smallskip
	
	If $ b > 0$ and $c > 0$, the $T_2$-periodic solution $Q(x, t)$ is orbitally asymptotically stable.
	\smallskip
	
	If $b < 0$ and $ c < 0$, $Q(x, t)$ is strongly unstable.
	
\end{thm}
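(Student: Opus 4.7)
For the stability claims I would first turn the periodic orbit into an equilibrium of an autonomous problem. Since $P(x,t)=r_0 e^{i\nu t}$ with $\nu := \beta r_0^{\sigma_1} - \gamma r_0^{\sigma_2}$, I substitute $u = e^{i\nu t}(r_0 + w)$ in \eqref{gCGL} with Neumann boundary condition; thanks to \eqref{condition}, $w$ satisfies an autonomous semilinear parabolic equation with $w \equiv 0$ as equilibrium, so that orbital asymptotic stability of $P$ becomes equivalent to asymptotic stability of $w=0$ modulo the one-dimensional tangent subspace generated by the phase symmetry $u \mapsto e^{is}u$. Splitting $w = w_1 + i w_2$, the linearisation at $0$ reads
\begin{align*}
\partial_t w_1 &= a\Delta w_1 - \alpha\Delta w_2 + A w_1,\\
\partial_t w_2 &= \alpha\Delta w_1 + a\Delta w_2 + B w_1,
\end{align*}
with $A := \sigma_1 b r_0^{\sigma_1} - \sigma_2 c r_0^{\sigma_2}$ and $B := \sigma_1 \beta r_0^{\sigma_1} - \sigma_2 \gamma r_0^{\sigma_2}$; the absence of $w_2$ from the zero-order terms is a direct consequence of phase invariance.

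I then diagonalise the linearisation in the orthonormal basis of Neumann eigenfunctions of $-\Delta$, with eigenvalues $0=\mu_0<\mu_1\le\ldots$ accumulating only at $+\infty$. On mode $n$ the linearisation becomes a $2\times 2$ matrix $M_n$ of trace $A-2a\mu_n$ and determinant $\mu_n\bigl((a^2+\alpha^2)\mu_n - Aa - B\alpha\bigr)$; in particular $M_0$ has eigenvalues $0$ and $A$, with the zero eigenvector $(0,r_0)$ spanning exactly the tangent direction to the orbit. A short computation using \eqref{condition} yields $A<0$ in both stable cases (in case~1, $A=\sigma_1 b r_0^{\sigma_1}$ with $b<0$; in case~2, $A=(\sigma_1-\sigma_2)b r_0^{\sigma_1}$ with $\sigma_1<\sigma_2$ and $b>0$). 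Verifying Routh--Hurwitz for each $M_n$ with $n\geq 1$ then gives a uniform spectral gap: the full linearisation has spectrum contained in $\{0\}\cup\{\operatorname{Re}\lambda\le-\zeta\}$ with $0$ a simple eigenvalue sitting in the phase direction. Invoking the principle of linearised stability for semilinear parabolic equations with a one-dimensional kernel (\cite{Henry}) yields orbital asymptotic stability of $P$ (respectively $Q$) at exponential rate $e^{-\zeta t}$, with the phase shift $\omega$ produced by an implicit-function argument near the orbit.

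For the instability claims, I would exploit that spatial constants are compatible both with the Neumann boundary condition and with $\Delta$. Given data $u_n^0 \equiv r_0 + 1/n$, by uniqueness the local solution of \eqref{gCGL} coincides with the solution $v_n$ of the ODE \eqref{ord-GL} with $v_n(0)=r_0+1/n$. Writing $v_n=\rho_n e^{i\phi_n}$, the modulus obeys $\dot\rho_n = b\rho_n^{\sigma_1+1} - c\rho_n^{\sigma_2+1} + k\rho_n$, which vanishes at $\rho_n=r_0$ by \eqref{condition} and whose linearisation there has coefficient $A$; in both instability cases ($b>0$, $k<0$, $c=0$ or $b<0$, $c<0$, $k=0$) the same computation gives $A>0$, so $r_0$ is an unstable equilibrium of this scalar ODE. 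For $\rho_n(0)>r_0$ this forces $\rho_n$ to be strictly increasing, and for large $\rho$ the leading superlinear term ($b\rho^{\sigma_1+1}$ in case~1, $-c\rho^{\sigma_2+1}$ in case~2, each with positive coefficient) provides a supersolution $\dot\rho\ge C\rho^{\sigma+1}$, which blows up in finite time.

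The main difficulty, in my view, lies in the stability half: uniformly bounding the real parts of the spectrum of $M_n$ for small $n\geq 1$, since $Aa+B\alpha$ need not have a favourable sign and so the Routh--Hurwitz determinant condition may require a quantitative comparison with $\mu_1$ or an appeal to additional implicit parameter constraints (such as those of Theorem~\ref{existence}); once this linear picture is in place, the nonlinear orbital stability conclusion is a routine application of Henry's framework, and the instability portion reduces to elementary one-dimensional phase-line analysis.
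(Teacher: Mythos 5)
Your instability argument coincides with the paper's: spatially constant data are preserved by the Neumann flow, uniqueness reduces the problem to the ODE \eqref{ord-GL}, and the modulus equation $\dot\rho=b\rho^{\sigma_1+1}-c\rho^{\sigma_2+1}+k\rho$ is strictly increasing above $r_0$ and bounded below by a superlinear supersolution, hence blows up in finite time. That half is complete.

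For the stability half, your route (rotating frame $u=e^{i\nu t}(r_0+w)$, autonomous linearisation, decomposition into the $2\times2$ blocks $M_n$ over the Neumann eigenvalues $\mu_n$ of $-\Delta$) is a reformulation of the paper's: there, the time-periodic linearisation \eqref{eq-variational} is treated by Floquet theory and reduced to planar systems $\dot v=-\lambda v+B(t)v$ indexed by the eigenvalues of $-A_N$, whose Floquet exponents are exactly the eigenvalues of your $M_n$. The difference lies in what is actually checked. The paper computes only the \emph{product} of the two multipliers of each planar block via Liouville's formula, which encodes precisely your trace condition $\operatorname{tr}M_n=A-2a\mu_n<0$, and concludes stability from that product having modulus less than $1$; but a product of two multipliers lying in the unit disc does not place each of them there. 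The obstruction you flag is therefore genuine and is not resolved by the paper either: one also needs $\det M_n=\mu_n\bigl((a^2+\alpha^2)\mu_n-Aa-\alpha B\bigr)>0$ for all $n\ge1$, and since $\alpha B$ has no favourable sign under the stated hypotheses, this can fail for small $\mu_1$ (large domains) combined with large dispersion coefficients --- the Benjamin--Feir sideband mechanism for spatially homogeneous oscillations. To close the argument one must either add the quantitative hypothesis $Aa+\alpha B\le(a^2+\alpha^2)\mu_1$, with $\mu_1$ the first nonzero Neumann eigenvalue, or impose a proportionality condition of the type $\beta/b=\gamma/c=\alpha/a$, which gives $B=\alpha A/a$ and hence $-Aa-\alpha B=-A(a^2+\alpha^2)/a>0$ automatically. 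Once this spectral gap is secured, your appeal to the orbital-stability theorem for semilinear parabolic equations (the paper cites Theorem 8.2.3 of \cite{Henry}) is the same closing step as the paper's, and the exponential convergence with phase shift follows.
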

\begin{rem}
	In the Neumann case, the solutions of \eqref{ord-GL} automatically embed in the flow for \eqref{gCGL}. The above theorem says that, concerning the stability of $P$ and $Q$, both flows have precisely the same dynamic behavior.
\end{rem}

 \vskip10pt
 The paper is organized as follows: in Section 2, we prove the global existence result (Theorem 1.2). In Section 3, we focus on the construction of bound-states on the real line and on bounded domains. In Section 4, we study the stability of the trivial solution. Finally, Section 5 is devoted to the stability of periodic solutions.
 
\section{Proof of the Theorem 1.2}

\begin{proof}

To prove the global existence of a solution, multiply
 \eqref{gCGL} equation by $\overline {u}, \, -\Delta \overline {u}$ and
$|u|^\sigma_2 \overline{u}$, integrate on $\Omega$ and take the real part. One obtains

 \begin{equation} \label{conserv1}
      \frac{1} {2} \frac{d} {dt} \| u \|_{L^2}^2 = - a \| \nabla u \|_{L^2}^2
 + b \| u \|_{L^{\sigma_1+2}}^{\sigma_1 + 2} - c  \| u \|_{L^{\sigma_2+2}}^{\sigma_2 + 2} + k \| u \|_{L^2} 
 \end{equation}
 
 \begin{multline} \label{conserv2}
 \begin{split}
   \frac{1} {2} \frac{d} {dt} \| \nabla u \|_{L^2}^2  =  -a \| \Delta u \|_{L^2}^2 
   - b\, \Re \int_\Omega \Delta \overline{u} | u |^{\sigma_1} u \,dx + 
   \beta\, \Im \int_\Omega \Delta \overline{u} | u |^{\sigma_1} u \,dx \\
   + c\, \Re \int_\Omega \Delta \overline{u} | u |^{\sigma_2} u \,dx 
    - \gamma\, \Im \int_\Omega \Delta \overline{u} | u |^{\sigma_2} u \,dx + k \| \nabla u \|_{L^2}^2
   \end{split}
   \end{multline}
   \smallskip

    \begin{equation} \label{conserv3}
 \begin{split}
\frac{1} {\sigma_2 +2} \frac{d} {dt} \| u \|_{L^{\sigma_2+2}}^{\sigma_2 + 2} = \, &
a \,\Re  \int_\Omega \Delta u | u |^{\sigma_2} \overline{u} \,dx  
- \alpha \, \Im \int_\Omega \Delta u | u |^{\sigma_2} \overline{u} \,dx  \\
& + b \| u \|_{L^{\sigma_1 + \sigma_2+2}}^{\sigma_1 + \sigma_2 + 2} - c  \| u \|_{L^{2 \sigma_2+2}}^{2 \sigma_2 + 2}
+ k  \| u \|_{L^{\sigma_2+2}}^{\sigma_2 + 2}
\end{split}
\end{equation} \smallskip

\noindent Next, if we multiply \eqref{conserv3} by $\gamma / \alpha$ (with $\alpha \neq 0$) and add to
\eqref{conserv1} $+$ \eqref{conserv2}, one obtains:

\begin{multline}\label{estim0}
\frac{d}{dt} \left[ \frac {1} {2} \| u \|_{H^1}^2  + \frac {\gamma} {\alpha (\sigma_2 + 2)}
 \| u \|_{L^{\sigma_2+2}}^{\sigma_2 + 2} \right]
= k \left[  \| u \|_{H^1}^2 + \frac {\gamma}{\alpha} \| u \|_{L^{\sigma_2+2}}^{\sigma_2 + 2} \right]   
-a \| \Delta u \|_{L^2}^2  \\
- a \| \nabla u \|_{L^2}^2 + b \| u \|_{L^{\sigma_1+2}}^{\sigma_1 + 2} 
 -c \| u \|_{L^{\sigma_2+2}}^{\sigma_2 + 2} + 
 \frac {\gamma b} {\alpha} \| u \|_{L^{\sigma_1 + \sigma_2+2}}^{\sigma_1+ \sigma_2 + 2} 
  - \frac {\gamma c} {\alpha} \| u \|_{L^{2 \sigma_2+2}}^{2 \sigma_2 + 2} \\
 + \left( c + \frac{\gamma a} {\alpha}\right) \Re  \int_\Omega \Delta u | u |^{\sigma_2} \overline{u} \,dx 
 - b \,\Re  \int_\Omega \Delta \overline{u} | u |^{\sigma_1} u \,dx +
 \beta \,\Im \int_\Omega \Delta \overline{u} | u |^{\sigma_1} u \,dx 
\end{multline}
By interpolation we have
$$
\| u \|_{L^{\sigma_1+2}}^{\sigma_1+2} \leq \| u \|_{L^2}^\frac{2 (\sigma_2 - \sigma_1)}{\sigma_2}
\| u \|_{L^{\sigma_2+2}}^\frac{\sigma_1 (\sigma_2+2)}{\sigma_2}
$$

\noindent and by the well-known Young inequality
$$ ab \leq \varepsilon a^p + \frac{p-1}{p^{p'}} \varepsilon^\frac{1}{1-p} b^{p'},\quad p > 1,\,\, \varepsilon > 0, $$
with $p = \sigma_2/\sigma_1$, we obtain
\begin{equation*}
\| u \|_{L^{\sigma_1+2}}^{\sigma_1+2} \leq \varepsilon \| u \|_{L^{\sigma_2+2}}^{\sigma_2+2} +
\frac{p-1}{p^{p'}} \varepsilon^\frac{1}{1-p} \| u \|_{L^2}^2
\end{equation*}
and we choose $\varepsilon$ such that $ b  \varepsilon = c$ (if $b > 0$). It follows that
\begin{equation}\label{estim1}
b \| u \|_{L^{\sigma_1+2}}^{\sigma_1+2} - c \| u \|_{L^{\sigma_2+2}}^{\sigma_2+2}  < c_1 \| u \|_{L^2}^2
\end{equation}
with $ c_1 = | b | \frac{p-1}{p^{p'}} \varepsilon^\frac{1}{1-p} $.
 Similarly 
$$ \| u \|_{L^{\sigma_1+ \sigma_2 +2}}^{\sigma_1+ \sigma_2 +2} 
\leq \delta \| u \|_{L^{2 \sigma_2+2}}^{2 \sigma_2+2} + C(\delta) \| u \|_{L^2} 
$$

\noindent and if we choose $\delta$ such that $\delta b  < c/2$ (if $b > 0$), we get
\begin{equation}\label{estim2}
\frac{\gamma b}{\alpha} \| u \|_{L^{\sigma_1+ \sigma_2 +2}}^{\sigma_1+ \sigma_2 +2}  -
\frac{\gamma c}{ \alpha} \| u \|_{L^{2 \sigma_2 +2}}^{2 \sigma_2 +2}  
< - \frac{\gamma c}{2 \alpha} \| u \|_{L^{2 \sigma_2 +2}}^{2 \sigma_2 +2}  + c_2  \| u \|_{L^2}^2
\end{equation}
with $c_2 = c_2(\delta)$. Next, we estimate

\begin{equation}\label{estim3}
\begin{split}
(| b | + |\beta|) \left|\int_\Omega \Delta \overline{u} |u|^{\sigma_1} u \right| \leq
(| b | + |\beta|) \| \Delta \|_{L^2} \| u \|_{2 \sigma_1 +2}^{\sigma_1 +1} \\
\leq \eta (| b | + |\beta|) \| \Delta \|_{L^2}^2  + \frac{|b|+|\beta|}{\eta}  \| u \|_{2 \sigma_1 +2}^{2\sigma_1 +2}
\end{split}
\end{equation}
and we take $\eta$ such that $\eta (|b| + |\beta|) \leq a$. By interpolation,
$$ \| u \|_{2 \sigma_1+2} \leq \| u \|_{2 \sigma_2+2}^\frac{2 \sigma_1(\sigma_2+1)}{\sigma_2}
\| u \|_{L^2}^\frac{2 (\sigma_2 - \sigma_1)}{\sigma_2} $$
and using the Young inequality (with $p= \frac{\sigma_2}{\sigma_2 - \sigma_1},\, p' = \frac{\sigma_2}{\sigma_1}$)
we get
\begin{equation}\label{estim4}
 \| u \|_{2 \sigma_1+2}^{2 \sigma_1+2} \leq  \rho \| u \|_{2 \sigma_2+2}^{2 \sigma_2 + 2}
+ C(\rho) \| u \|_{L^2}^2
\end{equation}
and we choose $\rho$ such that $\frac{| b | + | \beta |}{\eta} \rho \leq \frac{\gamma c}{\alpha}$.
Finally, notice that $ \Re  \int_\Omega \Delta \overline{u} | u |^{\sigma_2} u \,dx  \leq 0$. By \eqref{estim1},
\eqref{estim2}, \eqref{estim3}, \eqref{estim4} and \eqref{estim0} we obtain the conclusion by the
Gronwall inequality. 
\end{proof} \smallskip

\begin{rem} The complex Ginzburg-Landau equation on $\Omega \subset \R^N$ with the Dirichlet condition,
$$ u_t = e^{i\theta} \left( \Delta u + | u |^\sigma u\right) + ku,\quad k\geq 0, \,\,-\pi/2 < \theta < \pi/2 $$
allows explosive solutions in a finite time, $u(t)$, under the condition that the energy
$$   \frac{1}{2} \int_\Omega | \nabla u_0 |^2 dx - \frac{1}{\sigma+2}  \int_\Omega| u_0|^{\sigma+2}  dx < 0 $$
(see \cite{Caz, Caz-Weiss1}). This result remains true (with essentially the same proof) for the generalized Ginzburg-Landau equation:
\begin{equation}\label{gGL*}
u_t = e^{i\theta} \left[ \Delta u + | u |^{\sigma_1} u - \nu | u |^{\sigma_2} u \right] + ku,\quad k\geq 0,\,\,\nu\in\R
\end{equation}
More precisely, we have
\begin{prop} Assume $-\pi/2 < \theta < \pi/2, \,\sigma_1, \sigma_2 > 0,\,\, k \geq 0$ and
$\nu \leq 0$ or  $\nu > 0$ and $\sigma_2 \leq \sigma_1$.
Let $u_0\in H_0^1(\Omega)$ and $u(t)$ the corresponding maximal solution of \eqref{gGL*}. If 
$ E(u_0) < 0$ with
$$ E(u_0) =  \int_\Omega (\frac{1}{2}  | \nabla u_0 |^2  - \frac{1}{\sigma_1+2}  | u_0|^{\sigma_1+2} 
+ \nu  \frac{1}{\sigma_2+2}  | u_0|^{\sigma_2+2} ) dx < 0 $$
then $u$ blows up in a finite time.
\end{prop}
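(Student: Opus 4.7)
The plan is to argue by contradiction along the lines of the classical Cazenave--Weissler blow-up argument for CGL: assuming the solution is global, I would derive a superlinear differential inequality
$$
M'(t)\geq C\,M(t)^{(\sigma_1+2)/2},\qquad M(t):=\|u(t)\|_{L^2}^2,
$$
which, since $(\sigma_1+2)/2>1$ and $M(0)>0$ (forced by $E(u_0)<0$ requiring $u_0\not\equiv 0$), produces blow-up of $M$ in finite time by elementary ODE comparison, contradicting global existence.

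The two basic identities needed are standard. Rewriting \eqref{gGL*} as $e^{-i\theta}(u_t-ku)=\Delta u+|u|^{\sigma_1}u-\nu|u|^{\sigma_2}u$, multiplying by $\overline{u_t}$ and taking real parts yields, for $k=0$, the clean dissipation
$$
\frac{d}{dt}E(u(t))=-\cos\theta\,\|u_t\|_{L^2}^2\leq 0,
$$
so that $E(u(t))\leq E(u_0)<0$ for all $t$. Multiplying instead by $\bar u$ and using $\|\nabla u\|^2=2E(u)+\tfrac{2}{\sigma_1+2}\|u\|_{L^{\sigma_1+2}}^{\sigma_1+2}-\tfrac{2\nu}{\sigma_2+2}\|u\|_{L^{\sigma_2+2}}^{\sigma_2+2}$ gives
$$
M'(t)=-4\cos\theta\,E(u(t))+\frac{2\sigma_1\cos\theta}{\sigma_1+2}\|u\|_{L^{\sigma_1+2}}^{\sigma_1+2}-\frac{2\nu\sigma_2\cos\theta}{\sigma_2+2}\|u\|_{L^{\sigma_2+2}}^{\sigma_2+2}+2kM.
$$
Combining with the H\"older lower bound $\|u\|_{L^{\sigma_1+2}}^{\sigma_1+2}\geq |\Omega|^{-\sigma_1/2}M^{(\sigma_1+2)/2}$ on the bounded domain $\Omega$ then supplies the superlinear growth once the second nonlinear term is controlled.

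The dichotomy in the hypothesis exactly matches the two alternatives for this last step. When $\nu\leq 0$, the term $-\tfrac{2\nu\sigma_2\cos\theta}{\sigma_2+2}\|u\|_{L^{\sigma_2+2}}^{\sigma_2+2}$ is already non-negative and is simply dropped. When $\nu>0$ with $\sigma_2\leq\sigma_1$, the constraint $E(u(t))<0$ directly yields $\tfrac{\nu}{\sigma_2+2}\|u\|_{L^{\sigma_2+2}}^{\sigma_2+2}<\tfrac{1}{\sigma_1+2}\|u\|_{L^{\sigma_1+2}}^{\sigma_1+2}$, and multiplying by $\sigma_2\leq\sigma_1$ shows that the net coefficient of $\|u\|_{L^{\sigma_1+2}}^{\sigma_1+2}$ in $M'(t)$ remains strictly positive, leaving a positive residue $\tfrac{2(\sigma_1-\sigma_2)\cos\theta}{\sigma_1+2}\|u\|_{L^{\sigma_1+2}}^{\sigma_1+2}$ (and compensating with $E(u_0)<0$ in the borderline case $\sigma_1=\sigma_2$, where the hypothesis forces $\nu<1$).

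The main obstacle is the case $k>0$, in which the energy identity is not a pure dissipation: it acquires a correction $\tfrac{k\cos\theta}{2}M'(t)$ plus imaginary-part contributions from the equation. The cleanest fix I see is the substitution $v:=e^{-kt}u$, which satisfies a CGL-type equation without linear term but with time-amplified nonlinearities $e^{\sigma_j kt}|v|^{\sigma_j}v$; the previous argument then applies to $v$ with a time-dependent energy $\tilde E(t)$, whose monotonicity follows from a short bootstrap based on $\sigma_2\leq\sigma_1$ and $k\geq 0$. Blow-up of $\|v\|_{L^2}^2$ in finite time then immediately implies blow-up of $u=e^{kt}v$ at or before the same time.
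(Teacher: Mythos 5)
The paper itself offers no written proof of this Proposition: it asserts that the argument of \cite{Caz, Caz-Weiss1} goes through, and your two identities (energy monotonicity from testing with $\overline{u_t}$, mass growth from testing with $\bar u$), your use of $E(u(t))<0$ together with $\sigma_2\le\sigma_1$ to absorb the $\nu$-term, and the substitution $v=e^{-kt}u$ for $k>0$ are exactly the ingredients of those references. So the route is the intended one. There is, however, one genuine restriction in the way you close the argument: the H\"older lower bound $\|u\|_{L^{\sigma_1+2}}^{\sigma_1+2}\ge |\Omega|^{-\sigma_1/2}M^{(\sigma_1+2)/2}$ requires $|\Omega|<\infty$, which is not among the hypotheses --- the standing assumption of the paper only asks that $\partial\Omega$ be bounded, so $\Omega=\R^N$ or an exterior domain is admissible. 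On such domains the cited proofs close differently: one keeps $E(u(t))\le E(u_0)-\cos\theta\int_0^t\|u_s\|_{L^2}^2\,ds$, feeds it back into the mass identity to get $M'(t)\ge c_0+c_1\int_0^t\|u_s\|_{L^2}^2\,ds$ with $c_0,c_1>0$, and plays this against
$$
|M(t)-M(0)|\le 2\Bigl(\int_0^t M\Bigr)^{1/2}\Bigl(\int_0^t\|u_s\|_{L^2}^2\,ds\Bigr)^{1/2}
$$
to force $\Tma<\infty$, with no use of $|\Omega|$. As written, your proof covers only bounded domains.

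Two smaller points should be made explicit rather than parenthetical. In the borderline case $\sigma_1=\sigma_2$, $\nu>0$, the ``positive residue'' $\frac{2(\sigma_1-\sigma_2)\cos\theta}{\sigma_1+2}\|u\|_{L^{\sigma_1+2}}^{\sigma_1+2}$ vanishes and your inequality only yields $M'\ge -4\cos\theta\,E(u_0)>0$, i.e.\ linear growth; the correct fix is the one you sketch (the two powers merge into $(1-\nu)|u|^{\sigma_1}u$ with $1-\nu>0$ forced by $E(u_0)<0$, reducing to the single-nonlinearity case), but it must be run as a separate case. Likewise, for $k>0$ and $\nu>0$ the monotonicity of the modified energy of $v=e^{-kt}u$ is itself conditional: its derivative contains the positive term $\frac{k\nu\sigma_2}{\sigma_2+2}e^{\sigma_2 kt}\|v\|_{L^{\sigma_2+2}}^{\sigma_2+2}$, which is absorbed by the $\sigma_1$-term only while the modified energy is negative, so the continuity bootstrap you allude to is genuinely needed and should be written out. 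With these repairs the argument is complete and coincides with the one the paper implicitly invokes.
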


\end{rem}

\vskip10pt
\section{Existence of bound-states of \eqref{gGLcomplex}}
\vskip 10pt

\begin{proof}[Proof of Theorem \ref{thmbs}]  We look for solutions $\phi\in H^1(\R)$ of the elliptic
equation \eqref{gGLelip1}, or in an equivalent form,
\begin{equation}\label{gGLelip2}
\phi'' = \omega e^{i \tilde{\theta}} \phi - e^{i \tilde{\gamma_1}} | \phi |^{\sigma_1} \phi
+ \chi  e^{i \tilde{\gamma_2}} | \phi |^{\sigma_2} \phi + i k e^{i \tilde{\theta}} \phi 
\end{equation}
with $\tilde{\theta} = \pi/2 - \theta,\, \tilde{\gamma_1} = \gamma_1 - \theta, \, \tilde{\gamma_2} = \gamma_2 - \theta$.

\smallskip

\noindent 1. First consider the case $\chi = 1$.

\smallskip

Let us search for a solution $\phi\in H^1 (\R)$ of the equation \eqref{gGLelip2} of the form
$$ \phi = \psi \exp (i d \ln \psi) $$
where $d\in \R$ and $\psi > 0$ is the unique solution (up to translations of the origin) of the 
stationary Schr\"odinger equation
\begin{equation}\label{est-schr}
\psi'' = \varepsilon \psi - \eta \psi^{\sigma_1+1} - \zeta \psi^{\sigma_2+1} =: - f(\psi),\quad 
\varepsilon, \eta, \zeta > 0 
\end{equation}
Note that the existence of the solution $\psi$ follows from the fact that
$$ x_0 = \inf \{ x   > 0 : F(x) =0 \} > 0 \quad{\rm with} \quad F(z) = \int_0^z f(s) ds $$
and $f(x_0 ) > 0$ (see \cite{P-L}, Th.5).

\noindent First, one has
 	\begin{equation}\label{second-deriv}
 	\phi'' ( x ) = \left[ \psi'' (x) (1 + i d) + id (1 + i d) \frac{\psi'(x)^2} {\psi(x)} \right] \exp ( i d \ln \psi(x) ).
 	\end{equation}
  and we note that if $\psi$ is a solution of \eqref{est-schr}, then a direct integration of the equation yields
 	\begin{equation}\label{Schr-new}
 	\frac{(\psi')^2} { \psi} = \epsilon\, \psi - \frac {2 \eta} {\sigma_1 + 2} \,\psi^{\sigma_1 + 1} -
	\frac {2 \zeta} {\sigma_2 + 2} \,\psi^{\sigma_2 + 1}.
 	\end{equation}
 	It follows from \eqref{gGLelip2} that
 	\begin{align*}
 		\psi'' - d^2 \frac{(\psi')^2} {\psi} & = \omega\cos \tilde{\theta} \,\psi - k \sin \tilde{\theta} \,\psi - \cos \tilde{\gamma_1} \,\psi^{\sigma_1 +1} -  \cos \tilde{\gamma_2} \, \psi^{\sigma_2 +1}, \\
 		d \psi'' + d \frac{(\psi')^2} {\psi} & = \omega\sin \tilde{\theta} \, \psi + k \cos \tilde{\theta} \,\psi - \sin \tilde{\gamma_1} \,\psi^{\sigma_1 +1} - \sin \tilde{\gamma_2} \,\psi^{\sigma_2 +1}
 	\end{align*}
 	and so
 	
 	\begin{multline*}
 		( 1 + d^2) \psi''   = [\omega(d \sin \tilde{\theta} + \cos \tilde{\theta}) 
 		+ k (  d \cos \tilde{\theta} - \sin \tilde{\theta})] \,\psi  \\
 		- ( d \sin \tilde{\gamma_1} + \cos \tilde{\gamma_1} ) \,\psi^{\sigma_1 +1} - 
		( d \sin \tilde{\gamma_2} + \cos \tilde{\gamma_2} ) \,\psi^{\sigma_2 +1},
 	\end{multline*}
 	
 	\begin{multline*}
 		(1 + d^2) \frac{(\psi')^2} {\psi}  = \biggl[ \omega\biggl(\frac{\sin \tilde{\theta}} {d} - \cos \tilde{\theta}\biggl) \psi
 		+ k  \biggl(\frac{\cos \tilde{\theta}} {d} + \sin \tilde{\theta}\biggl) \psi \\
 		- \biggl( \frac{\sin \tilde{\gamma_1}} {d} - \cos \tilde{\gamma_1} \biggl) \psi^{\sigma_1 +1}
		 - \biggl( \frac{\sin \tilde{\gamma_2}} {d} - \cos \tilde{\gamma_2} \biggl) \psi^{\sigma_2 +1} \biggl] .
 	\end{multline*}
 	Hence, writing
 	\begin{equation*}\label{alfa}
 	\epsilon = \frac { \omega(d \sin \tilde{\theta} + \cos \tilde{\theta}) + k ( d \cos \tilde{\theta}  - \sin \tilde{\theta}) }
 	{ 1 + d^2}
 	\end{equation*}
 	
 	\begin{equation*}\label{beta}
 	\eta = \frac { d \sin \tilde{\gamma_1} + \cos \tilde{\gamma_1} }
 	{ 1 + d^2}
	\end{equation*}
	 and
	 \begin{equation*}\label{gama}
	  \zeta = \frac { d \sin \tilde{\gamma_2} + \cos \tilde{\gamma_2} }{1 + d^2}
 	\end{equation*}
 	we require that
 	\begin{equation}\label{equ-final1}
 	\omega\biggl( \frac{\sin \tilde{\theta}} {d} - \cos \tilde{\theta}\biggl) + k \biggl( \frac{\cos \tilde{\theta}} {d} + \sin \tilde{\theta} \biggl) 
 	= \omega(d \sin \tilde{\theta} + \cos \tilde{\theta} )
 	+ k ( d \cos \tilde{\theta} - \sin \tilde{\theta})
 	\end{equation}

 	\begin{equation}\label{equ-final2}
 	\frac{\sin \tilde{\gamma_1}} {d} - \cos \tilde{\gamma_1} = \frac{2}  {\sigma_1 + 2}
 	( d \sin \tilde{\gamma_1} + \cos \tilde{\gamma_1} ) 
 	\end{equation}
	and
	\begin{equation}\label{equ-final3}
 	\frac{\sin \tilde{\gamma_2}} {d} - \cos \tilde{\gamma_2} = \frac{2}  {\sigma_2 + 2}
 	( d \sin \tilde{\gamma_2} + \cos \tilde{\gamma_2} ) 
 	\end{equation}

 	\noindent From \eqref{equ-final1} we derive
 	\begin{equation}\label{valor-d}
 	d = \frac{ k \sin \tilde{\theta} - \omega\cos \tilde{\theta}  \pm \sqrt{\omega^2 + k^2}}
 	{\omega\sin \tilde{\theta} + k \cos \tilde{\theta}} =: d_{\pm}
 	\end{equation}
 	and so
 	$$
 	\epsilon=\pm\sqrt{\omega^2+k^2}.
 	$$
 	Since $\epsilon > 0$ (see \cite{P-L}), we must have $d = d_+$. Finally, the conditions \eqref{equ-final2},
	\eqref{equ-final3} and $\eta, \zeta > 0$ are equivalents to \eqref{eq:restricaogamma}.
	
	\smallskip
	
	\noindent 2. Now we consider the case $\chi = -1$.
	\smallskip
	
	Keeping the same notation, we obtain again the conclusions \eqref{equ-final1}, \eqref{equ-final2},
	and \eqref{equ-final3} assuming the existence of the solution of the stationary Schr\"odinger
	equation
	$$ \psi'' = \epsilon \psi - \eta \psi^{\sigma_1+1} + \zeta \psi^{\sigma_1+1}  $$
	with $\epsilon, \eta, \zeta > 0$. Set $ f(z) = - \epsilon z + \eta z^{\sigma_1+1} - \zeta z^{\sigma_2+1}$
	and take the primitive
	$$ F(z) = \int_0^z f(s) ds = \frac{z^2} {2} \left[ - \epsilon + \frac{2 \eta} {\sigma_1+2} z^{\sigma_1}
	- \frac {2 \zeta} {\sigma_2+2} z^{\sigma_2} \right] $$
	It is clear that $z_0 := \inf \{ z > 0 : F(z) = 0 \} > 0 $ and
	$$ f(z_0) = z_0 \left[ - \epsilon + \eta z_0^{\sigma_1} - \zeta z_0^{\sigma_2} \right] > 0 $$
	since
	
	$$ - \epsilon + \eta z_0^{\sigma_1} - \zeta z_0^{\sigma_2} = $$
	$$ - \epsilon + \frac{2 \eta} {\sigma_1+2} z_0^{\sigma_1} + \left(\eta - 
	\frac{2 \eta} {\sigma_1+2} \right) z_0^{\sigma_1}
	 - \frac{2 \zeta} {\sigma_2+2} z_0^{\sigma_2} + \left(  \frac{2 \zeta} {\sigma_2+2} - \zeta\right) z_0^{\sigma_2} = $$
	 
	$$  z_0^{\sigma_2} \left[ \frac{\sigma_1 \eta} {\sigma_1 +2} z_0^{\sigma_1 - \sigma_2}
	 - \frac{\sigma_2 \zeta} {\sigma_2 +2} \right] > 0 $$ 
	 \smallskip
	 
	\noindent  which is verified for $\sigma_2 $ small enough.
\end{proof}
\vskip 10pt
The remainder of this section is dedicated to the proof of Theorem \ref{teo:bifurca}. Throughout the proof, $(\cdot, \cdot)$ will denote the complex $L^2$-inner product
$$
(u,v)=\int_\Omega u(x)\overline{v(x)}dx,\quad u,v\in L^2(\Omega).
$$

Denote by $\lambda_0$ a double eigenvalue of the Laplace-Dirichlet operator 
$- \Delta$ in $L^2 (\Omega)$ and let
$u_1, u_2\in H^\infty (\Omega)\cap H^1_0(\Omega)$ be two $L^2$-orthonormal eigenfunctions, spanning the eigenspace $V$. Furthermore, define the orthogonal projection $P:L^2(\Omega)\to V^\perp$. As a consequence, for all $\lambda$ near $\lambda_0$, one has
$$
\| (\lambda + P\Delta)^{-1} f \|_{H^1_0(\Omega)} \lesssim \| f  \|_{H^{-1}(\Omega)},\quad \mbox{for all }f\in H^{-1}(\Omega). $$
%
To simplify notations, set
$$
L:= - e^{i\theta} \Delta,\quad M_1u=e^{i\gamma_1}|u|^{\sigma_1}u , \quad M_2u= \chi e^{i\gamma_2}|u|^{\sigma_2}u,\quad Mu=M_1u + M_2u.
$$
Then equation \eqref{gGLelip1} can be rewritten as
\begin{equation} \label{eigen-equ}
\lambda u - L u +Mu = 0, \quad \lambda=k-i\omega \in \C.
\end{equation}
Applying the Lyapunov-Schmidt reduction, equation \eqref{eigen-equ} is equivalent to the system
\begin{equation}\label{equ-1}
P (\lambda u - L u + Mu) = 0
\end{equation}
\begin{equation}\label{equ-2}
( \lambda u - L u + Mu, u_j) = 0,\quad j=1,2.
\end{equation}
We write $u=y+\epsilon_1u_1+\epsilon_2u_2$, $y\in V^\perp$. Since \eqref{eigen-equ} enjoys a gauge symmetry, we may assume, without loss of generality, that $\epsilon_1>0$. By \eqref{equ-1}, 
\begin{equation}\label{newequ-1}
y = (\lambda - P L)^{-1} [ - P M (y + \epsilon_1u_1+\epsilon_2 u_2) ] 
\end{equation}
On the other hand, equation \eqref{equ-2} reduces to
\begin{equation}\label{eq:lambda_interm}
\epsilon_j(\lambda-e^{i\theta}\lambda_0 ) = -(M(y+\epsilon_1u_1+\epsilon_2u_2), u_j),\quad j=1,2.
\end{equation}
Setting $\alpha=\epsilon_2/\epsilon_1$, it follows from \eqref{eq:lambda_interm} that
\begin{align}\label{eq:lambda}
\lambda-e^{i\theta}\lambda_0   &= -\frac{1}{\epsilon_1}(M(y+\epsilon_1u_1+\epsilon_2u_2), u_1)\nonumber\\&= -\epsilon_1^{\sigma_1}(M_1(u_1+\alpha u_2), u_1) + Q_1(y,\epsilon_1,\alpha),
\end{align}
where
$$
Q_1(y,\epsilon_1,\alpha) = -\frac{1}{\epsilon_1}(M(y+\epsilon_1u_1+\epsilon_2u_2)-M_1(\epsilon_1u_1+\epsilon_2u_2), u_1).
$$
On the other hand, again by \eqref{eq:lambda_interm},
\begin{align}\label{eq:interm_alpha}
\alpha(M(y+\epsilon_1u_1 + \epsilon_2u_2), u_1) &= -\alpha \epsilon_1(\lambda-e^{i\theta}\lambda_0  ) = -\epsilon_2(\lambda-e^{i\theta}\lambda_0  ) \nonumber\\&= (M(y+\epsilon_1u_1 + \epsilon_2u_2), u_2).
\end{align}
Setting
$$
Q_2(y,\epsilon_1,\alpha)=\frac{1}{\epsilon_1^{\sigma_1+1}}(M(y+\epsilon_1u_1 + \epsilon_2u_2)-M_1(\epsilon_1u_1 + \epsilon_2u_2), \overline{\alpha} u_1-u_2),
$$
equation \eqref{eq:interm_alpha} becomes
\begin{equation}\label{eq:alpha}
(M_1(u_1 + \alpha u_2), \overline{\alpha} u_1 - u_2) +  Q_2(y,\epsilon_1,\alpha)=0.
\end{equation}
%
%
%
The proof of Theorem \ref{teo:bifurca} will follow from the following steps: first, we show that, for each $\epsilon_1, \alpha, \lambda$ fixed, $y$ may be found through a fixed-point argument applied to \eqref{newequ-1}. Afterwards, we apply a Lipschitz version of the Implicit Function Theorem to solve \eqref{eq:lambda} and \eqref{eq:alpha}.
Since $\lambda_0  $ is not an eigenvalue of $P\Delta$, there exists  $r > 0$ small such that
\begin{equation}\label{V-r}
V_r := \{ \lambda \in \C : | \lambda - e^{i\theta}\lambda_0   | \leq r \} \subset \rho (PL) 
\end{equation}
Notice that
$$
H^1_0(\Omega)\hookrightarrow L^{\sigma_j+2}(\Omega),\quad j=1,2,
$$
and, by duality,
$$
 L^{\frac{\sigma_j+2}{\sigma_j+1}}(\Omega)\hookrightarrow H^{-1}(\Omega),\quad j=1,2.
$$
\begin{lem}\label{lem:construcao_y}
	Let $ \lambda \in V_r$. Then, for all $\delta > 0$ small enough and $ |\epsilon_1|, |\epsilon_2| \leq \delta$, there exists a solution 
	$ y = y(\epsilon_1,\epsilon_2,\lambda) \in H^1_0(\Omega)$
	of \eqref{newequ-1}. Moreover, for some universal constants $C > 0, K > 0$,
	\begin{equation}\label{estim-y}
	\| y(\epsilon_1,\epsilon_2,\lambda)\|_{H^1_0} \leq C \delta^{{\sigma_1}+1}  \\
	\end{equation}
	and
	\begin{equation}\label{estim-lip}
	\|  y(\epsilon_1,\epsilon_2,\lambda) -  y(\tilde{\epsilon}_1,\tilde{\epsilon}_2,\tilde{\lambda}) \|_{H^1_0} \leq K\delta^{{\sigma_1+1}}| (\epsilon_1,\epsilon_2,\lambda) - (\tilde{\epsilon}_1,\tilde{\epsilon}_2,\tilde{\lambda}) | \\
	\end{equation}
	for all  $\lambda_1, \tilde{\lambda_1} \in V_r $ and $|\epsilon_j|, |\tilde{\epsilon}_j|<\delta$, $j=1,2$.
	%
	%
\end{lem}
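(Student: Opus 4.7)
The plan is to apply the Banach fixed-point theorem to the map
$$
T_{(\epsilon_1,\epsilon_2,\lambda)}(y) := -(\lambda - PL)^{-1} P M(y + \epsilon_1 u_1 + \epsilon_2 u_2)
$$
on a small closed ball $B_R := \{y \in H^1_0(\Omega) : \|y\|_{H^1_0} \leq R\}$ with radius $R = C_\ast \delta^{\sigma_1+1}$, for a sufficiently large constant $C_\ast$ to be fixed. The key ingredients are the uniform resolvent bound $\|(\lambda - PL)^{-1} f\|_{H^1_0} \lesssim \|f\|_{H^{-1}}$ for $\lambda \in V_r$ stated just before the lemma, together with the Sobolev embeddings $H^1_0(\Omega) \hookrightarrow L^{\sigma_j+2}(\Omega)$ and the dual $L^{(\sigma_j+2)/(\sigma_j+1)}(\Omega) \hookrightarrow H^{-1}(\Omega)$, which hold under the hypothesis $\sigma_j < 4/(N-2)^+$. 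From the pointwise bound $||v|^{\sigma_j}v - |w|^{\sigma_j}w| \lesssim (|v|^{\sigma_j}+|w|^{\sigma_j})|v-w|$ and Hölder's inequality one then obtains
$$
\|M_j(v)\|_{H^{-1}} \lesssim \|v\|_{H^1_0}^{\sigma_j+1},\qquad \|M_j(v)-M_j(w)\|_{H^{-1}} \lesssim \bigl(\|v\|_{H^1_0}^{\sigma_j} + \|w\|_{H^1_0}^{\sigma_j}\bigr)\|v-w\|_{H^1_0}.
$$

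First I would verify that $T$ maps $B_R$ into itself. For $y \in B_R$ and $|\epsilon_j| \leq \delta$, the argument $w := y + \epsilon_1 u_1 + \epsilon_2 u_2$ has $H^1_0$-norm bounded by $R + 2\delta \lesssim \delta$ for $\delta$ small, hence
$$
\|T(y)\|_{H^1_0} \lesssim \|w\|_{H^1_0}^{\sigma_1+1} + \|w\|_{H^1_0}^{\sigma_2+1} \lesssim \delta^{\sigma_1+1},
$$
using $\sigma_1 \leq \sigma_2$. Taking $C_\ast$ large enough and $\delta$ small enough gives $T(B_R) \subset B_R$. The same estimates applied to differences yield $\|T(y) - T(\tilde y)\|_{H^1_0} \lesssim \delta^{\sigma_1}\|y - \tilde y\|_{H^1_0}$, so $T$ is a strict contraction for $\delta$ small. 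Banach's theorem produces the unique fixed point $y = y(\epsilon_1, \epsilon_2, \lambda) \in B_R$ satisfying \eqref{estim-y}.

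For the Lipschitz dependence on parameters, I would write
$$
y - \tilde y = T_{(\epsilon_1, \epsilon_2, \lambda)}(y) - T_{(\tilde\epsilon_1, \tilde\epsilon_2, \tilde\lambda)}(\tilde y)
$$
and split the right-hand side into three contributions. The $y$-difference gives $C\delta^{\sigma_1}\|y - \tilde y\|_{H^1_0}$, which is absorbed into the left-hand side. The $\epsilon$-differences are controlled by applying the nonlinear Lipschitz bound above to $w - \tilde w = (\epsilon_1 - \tilde\epsilon_1)u_1 + (\epsilon_2 - \tilde\epsilon_2)u_2$. The $\lambda$-difference is treated through the resolvent identity
$$
(\lambda - PL)^{-1} - (\tilde\lambda - PL)^{-1} = (\tilde\lambda - \lambda)(\lambda - PL)^{-1}(\tilde\lambda - PL)^{-1},
$$
which transfers the factor $|\lambda - \tilde\lambda|$ onto an expression of $H^{-1}$-size $O(\delta^{\sigma_1+1})$. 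Combining these three estimates and absorbing yields \eqref{estim-lip}.

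The main technical point is not the fixed-point argument itself, which is routine, but rather ensuring the uniform-in-$\lambda$ resolvent estimate on $V_r$; this is exactly the input that the authors have already recorded just before the lemma, and it reflects the fact that $e^{i\theta}\lambda_0$ has been removed as a spectral value by projecting onto $V^\perp$. Beyond that, the only delicate point is careful bookkeeping of the exponents of $\delta$ that each Hölder/Sobolev estimate produces.
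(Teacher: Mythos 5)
Your proposal is correct and follows essentially the same route as the paper: a Banach fixed-point argument for the map $y\mapsto (\lambda-PL)^{-1}[-PM(y+\epsilon_1u_1+\epsilon_2u_2)]$ using the uniform resolvent bound on $V_r$, the Sobolev/dual embeddings, and the resolvent identity for the Lipschitz dependence on $\lambda$. The only cosmetic difference is that you contract directly on the ball of radius $C_\ast\delta^{\sigma_1+1}$, whereas the paper first finds the fixed point in the ball of radius $\delta$ and then bootstraps the bound \eqref{estim-y} from the contraction estimate; both are valid.
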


\begin{proof} Denote by $S y= S(\epsilon_1,\epsilon_2,\lambda)y$ the right-hand side of \eqref{newequ-1} and
	by $R(\lambda)$ the resolvent $ (\lambda - PL)^{-1}$, which is a bounded operator from $H^{-1}(\Omega)$ to $H^1_0(\Omega)$, uniformly in $\lambda\in V_r$. Then, for 
	$\| y_1 \|_{H^1_0}, \| y_2\|_{H^1_0} \leq \delta$ and $r_j=(\sigma_j+2)/(\sigma_j+1),\ j=1,2$,
		\begin{align}
		\| S y_1\|_{H^1_0}  \lesssim \ &
	\| M(y_1 + \epsilon_1u_1 + \epsilon_2u_2) \|_{H^{-1}}\nonumber \\ \lesssim \ &
	\sum_{j=1,2}\| M_j(y_1 + \epsilon_1u_1 + \epsilon_2u_2)\|_{L^{r_j}} \label{eq:limitacao}\\ \lesssim \ &
	\sum_{j=1,2} \left(\|y_1\|_{L^{\sigma_j+2}} + \epsilon_1\|u_1\|_{L^{\sigma_j+2}} + |\epsilon_2|\|u_2\|_{L^{\sigma_j+2}} \right)^{\sigma_j+1} \lesssim (\delta^{\sigma_1+1} + \delta^{\sigma_2+1})\nonumber
	\end{align}
	and
	\begin{align}
	&	\| S y_1 - S y_2 \|_{H^1_0}  \lesssim 
	\| M(y_1 + \epsilon_1u_1 + \epsilon_2u_2) -  M(y_2 +  \epsilon_1u_1 + \epsilon_2u_2) \|_{H^{-1}}\nonumber \\ \lesssim \ &
	\sum_{j=1,2}\| M_j(y_1 + \epsilon_1u_1 + \epsilon_2u_2) -  M_j(y_2 +  \epsilon_1u_1 + \epsilon_2u_2) \|_{L^{r_j}} \nonumber\\ \lesssim \ &
\sum_{j=1,2} \left(\|y_1\|_{L^{\sigma_j+2}} + \|y_1\|_{L^{\sigma_j+2}} + \epsilon_1\|u_1\|_{L^{\sigma_j+2}} + |\epsilon_2|\|u_2\|_{L^{\sigma_j+2}} \right)^{\sigma_j}\| y_1-y_2\|_{L^{\sigma_j+2}} \nonumber\\\lesssim\ & (\delta^{\sigma_1} + \delta^{\sigma_2})\|y_1 - y_2\|_{H^1_0}\label{eq:contracao}
	\end{align}
	Thus, for all $\delta>0$ small, it follows from the Banach fixed point theorem that there exists a unique 
	$y = y (\epsilon_1, \epsilon_2, \lambda)$ solution to \eqref{newequ-1}
	with $ \| y(\epsilon_1, \epsilon_2, \lambda) \|_{H^1_0} \leq \delta$. By \eqref{eq:contracao}, this estimate can be improved to \eqref{estim-y}. 
	
	We now prove the Lipschitz estimate \eqref{estim-lip} in $\lambda$, as the estimate  in the remaining variables is straightforward.
	For $|\epsilon_1|, |\epsilon_2|<\delta$ fixed, take $\lambda_1, \lambda_2 \in V_r$ and consider $y_j=y(\epsilon_1, \epsilon_2, \lambda_j)$, $j=1,2$. Then
	\begin{equation*}
	\begin{split}
	y_1 -  y_2 = & ( R(\lambda_1) - R(\lambda_2) ) 
	[  P M (  y_1 + \epsilon_1u_1 + \epsilon_2u_2) ]\\
	& + R(\lambda_2) \big[  P M (  y_1 + \epsilon_1u_1 + \epsilon_2u_2) 
	-  P M ( y_2+ \epsilon_1u_1 + \epsilon_2u_2) \big ] \\
	= & ( \lambda_2 - \lambda_1)  R(\lambda_1)  R(\lambda_2) 	[ P M (  y_1 + \epsilon_1u_1 + \epsilon_2u_2) ]\\
	& + R(\lambda_2) \big[  P M (  y_1 + \epsilon_1u_1 + \epsilon_2u_2) 
	-  P M ( y_2+ \epsilon_1u_1 + \epsilon_2u_2) \big ].
	\end{split}
	\end{equation*}
	Therefore, proceeding as in \eqref{eq:contracao},
	\begin{align*}
	\|y_1-y_2\|_{H^1_0}&\lesssim \sum_{j=1,2}|\lambda_1-\lambda_2|\left(\|y_1\|_{H^1_0} + \epsilon_1 + |\epsilon_2|\right)^{\sigma_j+1} \\&\quad+ \left( \|y_1\|_{H^1_0} +\|y_2\|_{H^1_0}+ \epsilon_1 + |\epsilon_2|\right)^{\sigma_j}\|y_1-y_2\|_{H^1_0}
	\\&\lesssim (\delta^{\sigma_1+1} + \delta^{\sigma_2+1})|\lambda_1-\lambda_2| + (\delta^{\sigma_1} + \delta^{\sigma_2})|y_1-y_2\|_{H^1_0}
	\end{align*}
	The estimate follows for $\delta$ small enough.
\end{proof}

\begin{proof}[Proof of Theorem \ref{teo:bifurca}]
	We wish to solve system \eqref{eq:lambda}-\eqref{eq:alpha}. First, when one drops the remainder terms $R$ and $Q$, the system reduces to
	\begin{equation}\label{eq:sistemaimplicitasimplificado}
	\left\{\begin{array}{l}
	F_1(\epsilon_1, \lambda, \alpha)=\lambda-e^{i\theta}\lambda_0  +\epsilon_1^{\sigma_1}(M_1(u_1+\alpha u_2), u_1)=0\\
	F_2(\epsilon_1, \lambda, \alpha)=P(\alpha)=0
	\end{array}\right.
	\end{equation}
	which, by assumption, satisfies the conditions of the Implicit Function Theorem at $(\epsilon_1, \lambda, \alpha)=(0,e^{i\theta}\lambda_0  , \alpha_0)$. Now observe that, due to \eqref{estim-y} and \eqref{estim-lip}, $Q_1$ and $Q_2$ are Lipschitz continuous in $\epsilon_1, \lambda$ and $\alpha$, with constant proportional to $\epsilon_1^{\sigma_1-1}$, $\epsilon_1^{\sigma_1}$ and $\epsilon_1^{\sigma_1}$, respectively. We exemplify by proving the Lipschitz estimate for $Q_2$ with respect to $\lambda$: for $\epsilon_1$ small and $\alpha$ fixed, using H\"older inequality,
	\begin{align*}
	&\quad |Q_2(y(\lambda_1))-Q_2(y(\lambda_2))|\\&\lesssim \frac{1}{\epsilon_1^{\sigma_1+1}}|(M(y(\lambda_1)+\epsilon_1 u_1 + \epsilon_2 u_2)-M(y(\lambda_2)+\epsilon_1 u_1 + \epsilon_2 u_2),u_1)|\\&\lesssim \frac{1}{\epsilon_1^{\sigma_1+1}} \sum_{j=1,2}\left( \|y(\lambda_1)\|_{L^{\sigma_j+2}} + \|y(\lambda_2)\|_{L^{\sigma_j+2}} + \epsilon_1 + \epsilon_2 \right)^{\sigma_j+1}\|y(\lambda_1)-y(\lambda_2)\|_{L^{\sigma_j+2}} \\&\lesssim \frac{1}{\epsilon_1^{\sigma_1+1}}\left(\epsilon_1^{\sigma_1} + \epsilon_2^{\sigma_2}\right)\|y(\lambda_1)-y(\lambda_2)\|_{H^1_0}  \lesssim \epsilon_1^{\sigma_1}|\lambda_1-\lambda_2| 
	\end{align*}
	
	Therefore \eqref{eq:lambda}-\eqref{eq:alpha} is a Lipschitz perturbation, small in $\alpha$ and  $\lambda$, of \eqref{eq:sistemaimplicitasimplificado}. The conclusion now follows from \cite[Section 7.1]{Clarke}.
\end{proof}

\begin{rem}
	The above proof can be easily applied to the case of simple eigenvalues. Indeed, the Lyapunov-Schmidt reduction yields the system
	\begin{equation}\label{newequ-simples}
	y = (\lambda - P L)^{-1} [ - P M (y + \epsilon_1u_1) ] 
	\end{equation}
	\begin{equation}\label{eq:lambdasimples}
	\lambda-e^{i\theta}\lambda_0 = -\frac{1}{\epsilon_1}(M(y+\epsilon_1u_1), u_1)
	\end{equation}
	The first equation can be solved through a fixed point argument, while the second is in the conditions of the Implicit Function Theorem (in the Lipschitz formulation).
\end{rem}

\begin{exemp}\label{exemplo}
	Let $\Omega=(-1,1)^2$ and $\sigma_1=2$. As it is well-known, the second eigenvalue of the Laplacian $\lambda_2=5\pi^2/4$ is double, with associated eigenfunctions
	$$
	v_1(x,y)=\cos\left(\frac{\pi x}{2}\right)\sin\left(\pi y\right), \quad v_2(x,y)=\cos\left(\frac{\pi y}{2}\right)\sin\left(\pi x\right).
	$$
	If we choose $u_1=v_1$ and $u_2=v_2$, the function $P$ takes the form
	$$
	P(\alpha)=\frac{3}{16}(\alpha^3-\alpha).
	$$
	and we find three bifurcation branches with $\alpha_0=0, \pm 1$. The permutation $u_1=v_2$, $u_2=v_1$ provides yet another branch (formally identifiable with $\alpha_0=\infty$).
	In conclusion, we recover the results of \cite{DelPinoGarciaMusso}, which are specific for $\Omega=(-1,1)^2$, $\sigma_1=2$ and $ \theta, \gamma_1, \gamma_2=0$.
\end{exemp}

\section{Stability of the trivial equilibrium}

\vskip 10pt

In this section, we study the stability of the equilibrium solution $u\equiv 0$
and the  asymptotic decay of global solutions of \eqref{gCGL}
depending on the parameters and the coeficient for the driving term $k$. Let denote by $S(t)$ the dynamical system
associated to  \eqref{gCGL}:  $S(t) u_0 \equiv u(t; u_0),\, t \geq 0$. \smallskip

 \begin{defi}
We say that $u_0 \in H_0^1(\Omega)$ is stable if for any $\delta > 0$ there exists $\varepsilon > 0$ such that
$$ v\in H_0^1 (\Omega) , \,\, \| u_0 - v \|_{H^1} < \,\varepsilon \Rightarrow \,
\sup_{t \geq 0} \| S(t) u_0 - S(t) v \|_{H^1} < \delta $$
In addition, we say that $u_0$ is asymptotically stable if $u_0$ is stable and there exists $\eta > 0$
such that $\lim_{t\rightarrow\infty} \| S(t) u_0 - S(t) y \|_{H^1} = 0$ for all $y\in H_0^1(\Omega),\,
\| u_0 - y \|_{H^1} < \eta. $
\end{defi}

More generally if $S(t)$ denote a a dynamical system on a Banach space $H$ we recall that a Lyapunov
function is a continuous function $W : H \rightarrow \R$ such that
$$ \dot W (u) := \limsup_{t\rightarrow 0^+} \, \frac{1} {t} [ W (S(t) u) - W(u) ] \leq 0$$
for all $u\in H$. 
The next lemma is mainly proved in \cite{Hale}.

\begin{lem}\label{lemaHale}
Let $S(t)$ be a dynamical system on a Banach space $(D, \| \, \|)$. Let $E$ a normed space
such that $D\hookrightarrow E$ and $W$ a Lyapunov function on $D$ such that
$$ W(u_0) \geq k_1 \| u_0 \|_E,\,\, k_1 > 0,\, u_0 \in D. $$
Then, the equilibrium point $0$ is $\|\,\|_E$ - stable in the sense that
$$ u_0 \in D,\,\,  \| u_0 \|\rightarrow 0 \Rightarrow \| S(t) u_0 \|_E \rightarrow 0, $$
 uniformly in $t \geq 0$.

\noindent Assume in addition that
$$ \dot W (u_0) \leq - k_2 \| u_0 \|_E,\,\, k_2 > 0, \, u_0 \in D.$$
Then, $lim_{t\rightarrow \infty} \| S(t) u_0 \|_E = 0$ for any $u_0 \in D$.

\end{lem}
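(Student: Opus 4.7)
The plan is to use $W$ as the principal controlled quantity along the flow, letting the coercivity and Lyapunov monotonicity transfer information to the weaker norm $\|\cdot\|_E$. The proof splits cleanly into two parts, one for each conclusion.

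For the first statement, I would begin by showing that $t\mapsto W(S(t)u_0)$ is non-increasing. The semigroup identity $S(t+h)u_0=S(h)(S(t)u_0)$ and the hypothesis $\dot W\le 0$ give that the upper Dini derivative of this map is non-positive for every $t\ge 0$; continuity of $W$ along the trajectory then promotes this to the genuine monotonicity $W(S(t)u_0)\le W(u_0)$ for all $t\ge 0$. Combining with the coercivity $W\ge k_1\|\cdot\|_E$ yields the uniform-in-time estimate
$$
\sup_{t\ge 0}\|S(t)u_0\|_E\le \frac{1}{k_1} W(u_0),
$$
and continuity of $W$ at the origin in $D$ (with the natural normalization $W(0)=0$) gives $W(u_0)\to 0$ as $\|u_0\|\to 0$, establishing the claimed uniform $E$-stability.

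For the second statement, I would integrate the sharper inequality $\dot W(u)\le -k_2\|u\|_E$ along the trajectory. Absolute continuity of $t\mapsto W(S(t)u_0)$ (coming from the Dini bound) gives
$$
W(S(t)u_0)+k_2\int_0^t\|S(s)u_0\|_E\,ds\le W(u_0),\quad t\ge 0,
$$
and non-negativity of $W$ forces $\int_0^\infty\|S(s)u_0\|_E\,ds\le W(u_0)/k_2<\infty$. To pass from integrability to pointwise convergence, I would invoke a Barbalat-type argument: the monotonicity $W(S(t)u_0)\le W(u_0)$ confines the trajectory to a sublevel set of $W$, on which the semigroup acts in a uniformly continuous fashion into $E$, so that $t\mapsto \|S(t)u_0\|_E$ is uniformly continuous on $[0,\infty)$; a non-negative, uniformly continuous function with finite integral over $[0,\infty)$ must tend to zero.

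The main obstacle is the final Barbalat step: concluding $\|S(t)u_0\|_E\to 0$ from finite integral plus the Lyapunov decrease needs uniform continuity in time of the trajectory into $E$, which has to be extracted from some implicit regularity of the semigroup on sublevel sets of $W$. This is not a serious issue in the intended applications (parabolic equations with $D=H^1_0(\Omega)$ and $E$ an $L^p$-type space), where the analytic semigroup estimates yield the required equicontinuity, but it is the only non-formal ingredient in the argument.
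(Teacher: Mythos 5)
The paper does not actually prove this lemma: the authors write only that it ``is mainly proved in \cite{Hale}'' and move on, so there is no in-paper argument to compare yours against line by line. Your first part is the standard argument and is correct as far as it goes; note only that the monotonicity $W(S(t)u_0)\le W(u_0)$ combined with $W\ge k_1\|\cdot\|_E$ gives $\sup_{t\ge0}\|S(t)u_0\|_E\le W(u_0)/k_1$, and to conclude stability one does need, as you observe, that $W(u_0)\to0$ as $\|u_0\|\to0$ (equivalently $W(0)=0$ plus continuity). This is not in the hypotheses as stated but holds for the functionals $W_p$ and $V$ to which the lemma is applied, so your normalization is the right reading of the statement.

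For the second part you have correctly isolated the real issue. From $\dot W\le -k_2\|\cdot\|_E$ one only gets $\int_0^\infty\|S(s)u_0\|_E\,ds<\infty$, hence $\liminf_{t\to\infty}\|S(t)u_0\|_E=0$; upgrading this to a full limit is impossible from the stated hypotheses alone, since the coercivity $W\ge k_1\|\cdot\|_E$ is a lower bound and cannot rule out increasingly narrow excursions of $\|S(t)u_0\|_E$ away from zero while $W$ decreases to a positive limit. Some additional input is needed: your route supplies it as uniform continuity of $t\mapsto\|S(t)u_0\|_E$ (Barbalat), whereas Hale's framework, which the paper invokes, supplies it as precompactness of the positive orbit together with the LaSalle invariance principle (the largest invariant set in $\{\dot W=0\}\subset\{\|u\|_E=0\}$ is $\{0\}$). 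Both are legitimate and both draw on the parabolic smoothing of the semigroup in the intended application; neither is derivable from the lemma as literally stated. So your proof is essentially the standard one, and the ``gap'' you flag is really an omission in the statement of the lemma rather than a defect of your argument --- it would be worth saying explicitly that precompactness of orbits (or the equicontinuity you describe) is being assumed.
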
 
\medskip
 
\begin{proof}[Proof of Theorem 1.6]
 1. Let us denote by $S(t) u_0 \equiv u(t, u_0)$ the unique global solution of \eqref{gCGL} 
under the hypothesis of the Theorem \eqref{existence} and define
$$ W_p (u) = \int_\Omega | u(x) |^p dx,$$
with $ 2 \leq p \leq \frac{2 N} {N - 2} $ if $N > 2$, $ 2 \leq p < \infty$ if $N =1, 2$
and $ u = u(t, u_0)$. It is clear that $W_p : H_0^1 (\Omega) \rightarrow \R$, is a continuous functional and,
from  $\dot W_p (u) = \nabla W(u) \cdot \frac {d} {d t} u(t)$, we get

\begin{multline*}
\dot W_p(u)  = \, p \,\Re \int_\Omega | u |^{p-2} \overline{u} \{ (a + i \alpha) \Delta u +
 (b + i \beta) | u |^{\sigma_1} u  \\
  -  (c + i \gamma) | u |^{\sigma_2} u + ku \} \,dx 
  \end{multline*}
  \begin{align}
  \le & \,p k \int_\Omega | u |^p dx - ap \int_\Omega | u |^{p-2} | \nabla u|^2 dx 
  + p b \int_\Omega | u |^p | u |^{\sigma_1} dx \\
  - & p c \int_\Omega | u |^p | u |^{\sigma_2} dx + 
   p  \alpha \,\Im \int_\Omega \nabla \left(| u |^{p-2}\right) \overline{u} \nabla u\, dx.  \notag
\end{align}

\noindent Since
$$ \nabla | u |^{p-2} = \frac{p-2} {2} | u |^{p-4} ( u \nabla \overline{u} + \overline{u} \nabla u) $$
we obtain
$$ \left| p \alpha \Im \int_\Omega \nabla \left( | u |^{p-2}\right) \overline {u} \nabla u dx \right| \leq
p | \alpha | \frac {p-2} {2} \int_\Omega | u |^{p-2} | \nabla u |^2 dx. $$
Furthermore, by interpolation, one has
$$ \| u \|_{L^{p+\sigma_1}}^{p+\sigma_1} \leq 
\| u \|_{L^{p+\sigma_2}}^\frac{(p+\sigma_2) \sigma_1}{\sigma_2 }
 \| u \|_{L^p}^\frac{p (\sigma_2-\sigma_1)}{\sigma_2 } $$ 
and by the Young inequality
$$ \| u \|_{L^{p + \sigma_1}}^{p+\sigma_1} \leq  \frac{\sigma_2}{\sigma_1}
\| u \|_{L^{p+\sigma_2}}^{p + \sigma_2} + \frac {\sigma_2 - \sigma_1}{\sigma_2} \| u \|_{L^p}^p. $$
Hence, if 
$$ k\leq 0,\,\, \alpha \frac{p-2} {2} \leq a,\,\, b \frac{\sigma_1}{\sigma_2} \leq c \,\,\,{\rm and}\,\,\,
 b \frac {\sigma_2 - \sigma_1}{\sigma_2} \leq | k | $$
we derive that $ \dot W_p (u) \leq p k \| u \|_{L^p}^p$
and the conclusion follows from the Lemma 4.2. If $p = 2$ and $\Omega$ is bounded, by the Poincar\'e inequality,
we obtain the same conclusion under the conditions
$$ k > 0,\,\, b \frac{\sigma_1}{\sigma_2} \leq c \,\,\,{\rm and } \,\,\,
   b^+  \frac {\sigma_2 - \sigma_1}{\sigma_2} +   k < a  \left( \frac {1} {\omega_N} | \Omega | \right)^{2/N},$$
 with $b^+ = \max\{0, b\}$.

 \medskip
 \noindent 2. We now define the new functional:
 \begin{multline}\label{Vliap}
 V(u) := \frac {a} {2} \int_\Omega | \nabla u|^2 dx - \frac {b} {\sigma_1 +2} \int_\Omega | u |^{\sigma_1 + 2} dx\,  \\
   + \frac {c} {\sigma_2 +2} \int_\Omega | u |^{\sigma_2 + 2} dx        - \frac {k} {2} \int_\Omega | u |^2 dx. 
 \end{multline}
 
  \noindent It is clear that $V$ is a continuous real function on $H_0^1(\Omega)$. By interpolation
and the Young inequality, we have
\begin{equation}\label{Vliap8}
\| u \|_{L^{\sigma_1+2}}^{\sigma_1+2}  \leq
 \|u \|_{L^{\sigma_2 +2}}^\frac{\sigma_2+2}{\sigma_2} \| u \|_{L^2}^\frac{2 (\sigma_2 - \sigma_1)}{\sigma_2}  
  \leq \frac{\sigma_1}{\sigma_2} \| u \|_{L^{\sigma_2+2}}^{\sigma_2+2} +  \frac{\sigma_2 - \sigma_1}{\sigma_2}
  \| u \|_{L^2}^2
 \end{equation}
 Then we have $ V(u) \geq M \| u \|_{H^1},\, M > 0$, if 
 \begin{equation}\label{cond1}
  k < 0, \,\,\, \frac{b}{\sigma_1+2} \frac{\sigma_1}{\sigma_2} \leq \frac{c}{\sigma_2+2}\,\,\, {\rm  and}\,\,\,
  \frac{b}{\sigma_1+2} \frac{\sigma_2-\sigma_1}{\sigma_2} \leq \frac{|k|}{2}   
  \end{equation}
  or
   \begin{equation} \label{cond2}
   \begin{split}
  & k = 0, \,\,\, \frac{b}{\sigma_1+2} \frac{\sigma_1}{\sigma_2} \leq \frac{c}{\sigma_2+2}\,\,\, {\rm  and}\,\,\,
  \frac{b (\sigma_2-\sigma_1)}{(\sigma_1+2) \sigma_2} \leq \frac{a}{2}
 \left( \frac{1}{\omega_N} |\Omega| \right)^{-2/N}  \\
 & {\rm and } \,\,\Omega \,\, \hbox{\rm is a bounded domain}
 \end{split}
  \end{equation}
  
  \smallskip
   
\noindent  In addition, for any $u\in H_0^1(\Omega) \cap H^2 (\Omega)$ and $ h \in H_0^1(\Omega)$,
  we have $ V(u + h) = V(u) + L \cdot h + o (\| h \|_{H^1}) $, where
  $$ L \cdot h = - \Re \int_\Omega \left[ a \Delta \overline{u} - b | u |^\sigma \overline{u} 
  + k \overline{u} \right]  h \, dx. $$
  Therefore, for all $u = u(t) \in H_0^1(\Omega) \cap H^2 (\Omega)$,
  \begin{equation*}
  \begin{split}
  \dot V(u) =& - \int_\Omega  \left| a \Delta u + b | u |^{\sigma_1} u -  c | u |^{\sigma_2} u + k u \right|^2  dx \\
  & - \Re \int_\Omega \left( a \Delta u + b | u |^{\sigma_1} u - c | u |^{\sigma_2} u  \right)
   i \left( \alpha \Delta u + \beta | u |^{\sigma_1} u - \gamma | u |^{\sigma_2} u \right) dx \\
  & - \Re \int_\Omega i \,k \overline{u}  \left( \alpha \Delta u + \beta | u |^{\sigma_1} u
  - \gamma | u |^{\sigma_2} u \right) dx \\
  \end{split}
  \end{equation*}
  and, for $\frac {\alpha} {a} = \frac {\beta} {b} = \frac{\gamma}{c}$, we obtain
  \begin{equation}\label{Vliap2} 
  \dot V (u(t)) = - \int_\Omega \left| a \Delta u + b | u |^\sigma_1 u - c | u |^\sigma_2 u  + k u \right|^2  dx \leq 0,\,\, t > 0.
  \end{equation}
  Note that
  $$ \frac {1} {t} \,[V(S(t) u_0) - V(u_0)] = \dot V (S (t^*) u_0)$$
  for some $0 < t^* < t$ and so \eqref{Vliap2} is true for all $t \geq 0$. Hence, the functional $V$ is a
  Lyapunov function and, under the conditions \eqref{cond1}, \eqref{cond2},
   we have the stability in $H_0^1(\Omega)$ of the equilibrium solution $ u \equiv 0$.

  \noindent We prove now the asymptotic stability. We have
  \begin {equation}\label{Vliap3}
  \begin{split}
  & - \dot V (u) =  a^2 \int_\Omega | \Delta u|^2 dx + b^2 \int_\Omega | u |^{2 \sigma_1 + 2} dx   \\
    & + c^2 \int_\Omega | u |^{2 \sigma_2 + 2} dx + k^2 \int_\Omega | u |^2 dx 
   + 2 a b \,\Re \int_\Omega \Delta u | u |^{\sigma_1} \overline{u} \,dx  \\
   & -2 a c \,\Re \int_\Omega \Delta u | u |^{\sigma_2} \overline{u} \,dx
   + 2 a k \, \Re \int_\Omega \Delta u \overline{u} \,dx 
  + 2 b k\,  \int_\Omega | u |^{\sigma_1 + 2} \,dx  \\
   & - 2 c k\,  \int_\Omega | u |^{\sigma_2 + 2} dx 
   - 2 b c   \int_\Omega | u |^{\sigma_1 + \sigma_2 + 2}\, dx
  \end{split}
  \end{equation}
  
\noindent  Next one has the following estimates:
  
   \begin{equation*}
   \begin{split}
   \Re \int_\Omega \Delta u | u |^{\sigma_2} \overline{u} dx  & = -\int_\Omega | \nabla u |^2 | u |^{\sigma_2} dx \\
   & - \frac {\sigma_2} {2} \Re \int_\Omega | u |^{\sigma_2 -2} \nabla u \cdot ( \nabla u \overline{u}
   + u \nabla \overline{u} ) \overline{u} \, dx \\
   & = - \int_\Omega | \nabla u |^2 | u |^{\sigma_2} dx - \frac {\sigma_2} {2} \int_\Omega | \nabla u |^2 | u |^{\sigma_2} dx \\
   & \quad - \frac {\sigma_2} {2} \Re \int_\Omega | u |^{\sigma_2 -2} (\nabla u \cdot  \nabla u ) \overline{u}^2 dx  \\
   & \leq - \int_\Omega | \nabla u |^2 | u |^\sigma dx \\
   \end{split}
   \end{equation*}
   and so
   \begin{equation}\label{Vliap5}
   - 2 a c \,\Re \int_\Omega \Delta u | u |^{\sigma_2} \overline{u} \,dx
    \geq 2 a c \int_\Omega | \nabla u |^2 | u |^{\sigma_2} dx.
   \end{equation}
   Also
   \begin{equation}\label{Vliap6}
   2 a k\, \Re \int_\Omega \Delta u \overline{u} dx = - 2 a k \int_\Omega | \nabla u |^2 dx. 
   \end{equation}
   Since
   $$  \int_\Omega | u |^{\sigma_1 + \sigma_2 + 2}\, dx \leq \| u \|_{L^{2 \sigma_1+2}}^{\sigma_1+1}
   \| u \|_{L{2 \sigma_2+2}}^{\sigma_2+1}$$
   we obtain
   \begin{equation}\label{Vliap7}
   -2 b c  \int_\Omega | u |^{\sigma_1 + \sigma_2 + 2}\, dx \geq - b^2 \| u \|_{L^{2 \sigma_1+2}}^{2\sigma_1+2}
   - c^2  \| u \|_{L{2 \sigma_2+2}}^{2\sigma_2+2}
   \end{equation}
   and, if $ b \sigma_1/\sigma_2 < c$ and $ b (\sigma_2 - \sigma_1)/\sigma_2 < |k|/2$,
   it follows from \eqref{Vliap8} 
   \begin{equation}\label{Vliap9}
   2 | b k| \,\| u \|_{L^{\sigma_1+2}}^{\sigma_1+2} < 2 c | k |\, \| u \|_{L^{\sigma_2+2}}^{\sigma_2+2}
   + k^2 \| u \|_{L^2}^2
   \end{equation}
   Finally we remark that
   $$ \left|  \int_\Omega \Delta u | u |^{\sigma_1} \overline{u} dx \right| \leq
   (\sigma_1 +1) \int_\Omega | \nabla u |^2 | u |^{\sigma_1} dx  $$
   
  \noindent  and so, if we assume  $ | b | \, (\sigma_1 +1) < \min \{ c, |k| \} $, we get
   \begin{equation}\label{Vliap10}
   \begin{split}
  & \left|  2 a b \int_\Omega \Delta u | u |^{\sigma_1} \overline{u} dx \right| \\
  & \leq 2 a | b | (\sigma_1 +1) \int_\Omega | \nabla u |^2 dx + 2 a | b | (\sigma_1 +1) \int_\Omega | \nabla u |^2 
   | u |^{\sigma_2} dx  \\
   & < 2 a c \int_\Omega | \nabla u |^2 | u |^{\sigma_2} dx + 
   2 a | k | \int_\Omega | \nabla u |^2 dx
   \end{split}
   \end{equation}
   If $k < 0$, is now clear that the asymptotic stability of $u\equiv 0$ follows from
   \eqref{Vliap3} and \eqref{Vliap5}, \eqref{Vliap6}, \eqref{Vliap7}, \eqref{Vliap9}, \eqref{Vliap10}.
   
   \noindent With $k=0$ and $\Omega $ a bounded domain, we estimate
   $$ \int_\Omega | \nabla u|^2 dx \leq \left| \int_\Omega \Delta u \overline{u} \,dx \right| \leq
  \left( \int_\Omega | \Delta u |^2 dx \right)^{1/2} \left( \int_\Omega | u |^2 dx \right)^{1/2}
  $$
  and by the Poincar\'e inequality,
  $$ \left( \int_\Omega | \Delta u |^2 dx \right)^{1/2} \geq \left(\frac {1} {\omega_N} | \Omega | \right)^{- 1/N}
  \left( \int_\Omega | \nabla u |^2 dx \right)^{1/2}. $$
  Hence
  \begin{equation}\label{Vliap4}
  a^2 \int_\Omega | \Delta u |^2 dx \geq a^2 \left(\frac {1} {\omega_N} | \Omega | \right)^{- 2/N}
   \int_\Omega | \nabla u |^2 dx.
   \end{equation}
   Since $k=0$, it is sufficient to estimate the fifth term in the r.h.s of \eqref{Vliap3} with
   $b > 0$. From the estimations \eqref{Vliap5}, \eqref{Vliap10} and \eqref{Vliap4}  we must require
   $$ b (\sigma_1 +1 ) \leq c,\quad {\rm and}\quad  b (\sigma_1 +1) 
   < \frac{a}{2} \left(\frac {1} {\omega_N} | \Omega | \right)^{- 2/N}$$
   and we note that this second condition imply the last stability condition in \eqref{cond2}. The proof
   is now complete.
    \end{proof}
    \vskip 10pt
   
   \section{ Stability of some time periodic solutions of \eqref{gCGL}.}
   
   \vskip 10pt
   
   Consider the \eqref{gCGL} equation on a bounded domain $\Omega$ with the Neumann condition
   on the boundary. We study now the stability of some particular time periodic solutions. Le be
   $\vartheta (t) $ a $T$-periodic solution of the ordinary differential equation \eqref{ord-GL},
   \begin{equation*}
   \dot{u} = (b + i \beta) | u |^{\sigma_1} u - (c + i \gamma) | u |^{\sigma_2} u + ku 
   \end{equation*}
   associated to the \eqref{gCGL} equation. \smallskip
   
   \begin{proof}[Proof of Theorem 1.10] First we linearise the \eqref{gCGL} equation around the $T$-periodic solution
   	$\Theta (x, t) \equiv \vartheta (t)$. We obtain the linear variational equation
   	\begin{equation}\label{eq-variational}
   	\partial_t v = A_N v + B(t) v
   	\end{equation}
   	where $A_N = (a + i \alpha) \Delta$ denote the Neumann operator. If we set $ v = v_1 + i v_2$
   	and $\vartheta = \vartheta_1 + i \vartheta_2$ we have
   	
   	\begin{equation}\label{eq-B1}
   	\begin{split}
   	\Re (B(t) v )   = \,\,&  b | \vartheta |^{\sigma_1} v_1  - \beta  | \vartheta |^{\sigma_1} v_2
   	+ b\, \sigma_1  | \vartheta |^{\sigma_1-2} \vartheta_1 \Re (\overline{\vartheta} \,v ) \\
   	& - \beta \sigma_1  | \vartheta |^{\sigma_1-2} \vartheta_2 \Re (\overline{\vartheta} \,v ) -
   	c\, | \vartheta |^{\sigma_2} v_1 + \gamma | \vartheta |^{\sigma_2} v_2    \\
   	& - c \,\sigma_2  | \vartheta |^{\sigma_2-2} \vartheta_1 \Re (\overline{\vartheta} \,v )
   	+ \gamma \sigma_2  | \vartheta |^{\sigma_2-2} \vartheta_2 \Re (\overline{\vartheta} \,v ) 
   	\end{split}
   	\end{equation}
   	
   	\begin{equation}\label{eq-B2}
   	\begin{split}
   	\Im (B(t) v )   = \,\,&  \beta | \vartheta |^{\sigma_1} v_1  +b  | \vartheta |^{\sigma_1} v_2
   	+ b\, \sigma_1  | \vartheta |^{\sigma_1-2} \vartheta_2 \Re (\overline{\vartheta} \,v ) \\
   	& + \beta \sigma_1  | \vartheta |^{\sigma_1-2} \vartheta_1 \Re (\overline{\vartheta} \,v ) -
   	c\, | \vartheta |^{\sigma_2} v_2 - \gamma | \vartheta |^{\sigma_2} v_1    \\
   	& - c \,\sigma_2  | \vartheta |^{\sigma_2-2} \vartheta_2 \Re (\overline{\vartheta} \,v )
   	- \gamma \sigma_2  | \vartheta |^{\sigma_2-2} \vartheta_1 \Re (\overline{\vartheta} \,v ) 
   	\end{split}
   	\end{equation}
   	Notice that $B(t)$ is $T$-periodic. 
   	
   	\noindent Now, let $R(t, s)$ the evolution operator for \eqref{eq-variational}, i.e.
   	$$ R(t, s) v_0 = v(t; s, v_0) $$
   	is the solution of \eqref{eq-variational} with initial data, $v(s) = v_0$, and recall that the eigenvalues 
   	of the {\it period map}, $U_0 = R(T, 0)$, are  the characteristic multipliers. Since $A_N$ has compact
   	resolvent, $U_0$ is compact and so, the spectrum $\sigma (U_0)\backslash \{0\}$ is entirely
   	composed by characteristic multipliers (see \cite[pg. 197]{Henry}). Next, we prove the following result:
   	
   	\noindent The characteristic multipliers of \eqref{eq-variational} are the multipliers of the planar
   	system
   	\begin{equation}\label{eq-variational2}
   	\dot v = - \lambda v + B(t) v
   	\end{equation}
   	for any $\lambda$, eigenvalue of the Neumann operator $-A_N = -(a + i \alpha ) \Delta$.
   	
   	\noindent In fact, let $ \tilde R (t, s)$ be the evolution operator for the planar system $\dot v = B(t) v$. By the
   	Floquet representation we have
   	$$ \tilde U_0 := \tilde R (T, 0) = P(T) e^{ C T} P(T)^{-1} $$
   	where $C$ is a constant matrix and $P(T)$ is an invertible matrix. Then we obtain
   	\begin{equation*}
   	\begin{split}
   	U_0 = R (T, 0) = & \,\, e^{A_N T} \tilde R (T, 0) \\
   	= & \,\, e^{A_N T} P(T) e^{ C T} P(T)^{-1} = P(T) e^{(A_N + C)T} P(T)^{-1} 
   	\end{split}
   	\end{equation*}
   	and so the eigenvalues of $U_0$ are the eigenvalues of $ e^{(A_N + C)T} $, i.e. the characteristic
   	multipliers of \eqref{eq-variational} are those of \eqref{eq-variational2}. 
   	
   	\noindent Denote this multipliers by $\mu_j, (j=1, 2)$. It is well known that $\mu_j$ must meet
   	the condition (see, e.g. \cite{Coddington})
   	\begin{equation}\label{prod-caract}
   	\mu_1 \mu_2 = \exp \left( \int_0^T {\rm Tr} (-\lambda I + B(t) )\right) 
   	\end{equation}
   	We consider now the two cases stated in the theorem:
   	
   	\smallskip
   	
   	\noindent 1. In \eqref{gCGL} equation let $c= 0$
   	and assume $ b k < 0$. Take the $T_1$-periodic solution $P(x, t) \equiv p(t)$ for all $x\in \Omega$. We obtain, 
   	for each $\lambda$ eigenvalue of $- \Delta$ with the Neumann condition,
   	\begin{equation}
   	\begin{split}
   	\mu_1 \mu_2 = &\,\, \exp \left( \int_0^{T_1} b | p(t) |^{\sigma_1} (2 + \sigma_1) + 2 k - 2 \lambda \,\, dt \right) \\
   	= & \exp \left( \int_0^{T_1} \sigma_1 b | p(t)|^{\sigma_1}  - 2 \lambda \,\, dt \right)
   	\end{split}
   	\end{equation}
   	since $ b | p(t) |^{\sigma_1} + k = 0 $ for all $ t\in [0, T_1]$ ( recall that the $T_1$-periodic solution $p(t)$
   	has his orbit in the circle $ | z | = r_1$, with $b r_1^{\sigma_1} + k = 0$ ). 
   	If $b < 0$ it is clear that
   	$$ \mu_1 \mu_2 = \exp (-k \sigma_1 T_1 - 2 \lambda T_1) < \exp (- k \sigma_1 T_1)  < 1$$
   	for all $\lambda \in \sigma(- A_N) $, which implies the asymptotic stability of $P(x, t)$ (see \cite{Henry}, Th.8.2.3).
   	
   	\noindent If $b > 0$ (and $k < 0$), easily we find the instability of the solution $p(t)$ of \label{ord2-GL}
   	(and so the instability of $P(x, t)$). In fact, multiply \eqref{ord-GL} by $\bar{u}$ and take the real part. We obtain
   	$$ \frac{d}{dt} | p |^2 = 2 b | p |^{\sigma_1+2} + 2 k | p |^2 $$
   	and the solution $| p(t) |^2 $ with initial data $| p(0) |^2 = r_1^2+\varepsilon, $ ($\varepsilon > 0$),
   	blow up in a finite time, since $ 2 b r_1^{\sigma_1+2} + 2 k r_1^2 = 0$.

   	\medskip
   	
   	\noindent 2. Assume now $k=0$ and $bc > 0$. Take the $T_2$-periodic solution $Q(x, t) \equiv q(t)$ and 
   	recall that $q(t)$ has his orbit in the circle $ | z | = r_2$, with $b - c r_2^{\sigma_2-\sigma_1} = 0$. We have
   	\begin{equation*}
   	\begin{split}
   	\mu_1 \mu_2 = &  \exp \left( \int_0^{T_2} 2 b | q(t) |^{\sigma_1} + b \sigma_1   | q(t) |^{\sigma_1}
   	- 2 c | q(t) |^{\sigma_2} - c \sigma_2 | q(t) |^{\sigma_2}  - 2 \lambda \,\, dt \right) \\
   	= & \exp \left(\int_0^{T_2} b \sigma_1   | q(t) |^{\sigma_1} - c \sigma_2 | q(t) |^{\sigma_2}  - 2 \lambda \,\, dt \right) \\
   	= & \exp\, (-c (\sigma_2 - \sigma_1) r_2^{\sigma_2} T_2 - 2 \lambda T_2)
   	\end{split}
   	\end{equation*}
   	We pursued just like before: if $c > 0$ (and so $ b > 0$) we have
   	$  \mu_1 \mu_2 = \exp ( -c (\sigma_2 - \sigma_1) r_1^{\sigma_2} T_2 - 2 \lambda T_2) 
   	\leq  \exp ( -c (\sigma_2 - \sigma_1) r_1^{\sigma_2} T_2 ) < 1$ for all $\lambda \in \sigma(- A_N) $
   	which proves the the asymptotic stability of $Q(x, t)$.
   	
   	\noindent If $c < 0$ and $b < 0$, from \eqref{ord-GL} we derive
   	$$ \frac{d}{dt} | q |^2 = 2 b | q |^{\sigma_1+2} - 2 c | q |^{\sigma_2+2} $$
   	which implies the blow-up in a finite time since 
   	$2 b r_2^{\sigma_1+2} - 2 c  r_2^{\sigma_2+2} = 0$. In particular we have prove, in this case,
   	the instability of $Q(x, t)$.

   \end{proof}

   \section{Acknowledgements}
  S. Correia was partially supported by Funda\c c\~ao para a Ci\^encia e Tecnologia, through the grant UIDB/MAT/04459/2020.
  M. Figueira was partially supported by Funda\c c\~ao para a Ci\^encia e Tecnologia, through the grant UIDB/04561/2020.

 \bigskip
 \bigskip
 
 \normalsize
 
 \begin{center}
	{\scshape Sim\~ao Correia}\\
{\footnotesize
	Center for Mathematical Analysis, Geometry and Dynamical Systems,\\
	Department of Mathematics,\\
	Instituto Superior T\'ecnico, Universidade de Lisboa\\
	Av. Rovisco Pais, 1049-001 Lisboa, Portugal\\
	simao.f.correia@tecnico.ulisboa.pt
}
  \bigskip
 
 	 	{\scshape M\'ario Figueira}\\
 	{\footnotesize
 		CMAF-CIO, Universidade de Lisboa\\
Edif\'{\i}cio C6, Campo Grande\\
1749-016 Lisboa, Portugal\\
\email{msfigueira@fc.ul.pt}
}

 \end{center} 
 
 \end{document}